\documentclass[11pt]{article}

\usepackage[T1]{fontenc}
\usepackage{lmodern}
\usepackage{amsmath,amssymb,amsthm,mathtools}
\usepackage{enumitem}
\usepackage{microtype}
\usepackage{geometry}
\usepackage[hidelinks]{hyperref}
\newtheorem{theorem}{Theorem}[section]
\newtheorem{proposition}[theorem]{Proposition}
\newtheorem{lemma}[theorem]{Lemma}
\newtheorem{corollary}[theorem]{Corollary}
\theoremstyle{definition}
\newtheorem{definition}[theorem]{Definition}
\newtheorem{example}[theorem]{Example}
\theoremstyle{remark}
\newtheorem{remark}[theorem]{Remark}

\newcommand{\K}{\mathcal K}
\newcommand{\ImG}{\operatorname{Im}(g)}
\newcommand{\diam}{\operatorname{diam}}

\title{H\"older Selections under Stieltjes Clocks:\\Uniform Disconnectedness and Jump Dominance}
\author{%
\large Serkan \.{I}lter$^{1,*}$ \qquad
\large H\"ulya Duru$^{2}$ \qquad
\large Arkady Kitover$^{3}$ \qquad
\large Mehmet Orhon$^{4}$\\[7pt]
\normalsize $^{1,2}$Department of Mathematics, Faculty of Science, Istanbul University, Istanbul, T\"urkiye\\
\normalsize $^{3}$Department of Mathematics, Community College of Philadelphia, Philadelphia, Pennsylvania, USA\\
\normalsize $^{4}$Department of Mathematics and Statistics, University of New Hampshire, Durham, New Hampshire, USA\\
\\[5pt]
\small \texttt{ilters@istanbul.edu.tr} \qquad
\texttt{hduru@istanbul.edu.tr}\\
\small \texttt{akitover@ccp.edu} \qquad
\texttt{mo@unh.edu}\\[5pt]
\footnotesize $^{*}$Corresponding author: Serkan \.{I}lter, \texttt{ilters@istanbul.edu.tr}%
}
\date{}

\begin{document}
\maketitle

\begin{abstract}
It is known that, for ordinary H\"older multifunctions with $0<\alpha<1$, H\"older regularity alone does not guarantee the existence of a H\"older selection, and in general even a continuous selection may fail to exist. We ask how this situation changes when the parameter is measured through a Stieltjes clock. For $0<\alpha<1$, we characterize the clocks for which every compact-valued $g$-H\"older multifunction admits a $g$-H\"older selection. The determining condition is uniform disconnectedness of the clock image $\ImG$. For left-continuous and nondecreasing clocks, this condition is equivalent to uniform jump dominance: every positive clock increment contains a jump carrying a fixed positive fraction of that increment. Under this condition, a selection can be chosen through any prescribed point of the graph, with explicit control of its H\"older regularity. Conversely, when the condition fails, there exists a compact-valued $g$-H\"older multifunction with no $g$-H\"older selection. The results show that the selection property is governed not simply by the presence of jumps, but by how their sizes are distributed across scales.
\end{abstract}

\noindent\textbf{Keywords:} set-valued map; H\"older selection; Stieltjes clock; uniform disconnectedness; ultrametric; jump dominance.

\medskip
\noindent\textbf{2020 Mathematics Subject Classification:} 54C65 (Primary); 26A16, 54C60, 26E70 (Secondary).

\section{Introduction}\label{sec:intro}

A regular set-valued map need not admit a selection with the same regularity. This difficulty is especially pronounced for H\"older exponents below one, where even a compact-valued H\"older multifunction on an interval may fail to have a continuous selection. The situation can change when the parameter is measured through a Stieltjes clock that does not evolve continuously at every scale. The main question of this paper is therefore which Stieltjes clocks allow H\"older regularity to pass from a multifunction to one of its selections.

Classical selection theory shows that the existence and regularity of selections depend on the structure of both the domain and the values; convex-valued continuous-selection results are a basic example \cite{Michael1956}. Here the values are not assumed to be convex and may be arbitrary nonempty compact sets. Moreover, the aim is not merely to obtain a continuous selection, but to preserve the H\"older exponent of the multifunction.

For $0<\alpha<1$, there is a substantial obstruction. Chistyakov and Galkin constructed a compact-valued $\alpha$-H\"older multifunction on an interval with no continuous selection \cite{ChistyakovGalkin1998}. Later work on H\"older multifunctions and set-valued Young integration likewise shows that, without additional structure, H\"older regularity of the multifunction does not pass automatically to its selections; useful results may instead be obtained in weaker variation classes \cite{MichtaMotyl2020,MichtaMotyl2022}. Thus, in the regime $0<\alpha<1$, a natural question is which parameter sets remove this obstruction.

Stieltjes clocks provide a natural setting for this question. If a multifunction is $g$-H\"older, then points carrying the same value of $g$ must have the same set value. The selection problem can therefore be transferred from the original interval $I$ to the clock image $\ImG$. This leads to the central question:

\begin{quote}
For $0<\alpha<1$, which clock images $\ImG\subset\mathbb R$ guarantee that every compact-valued $\alpha$-H\"older multifunction admits a selection with the same H\"older exponent?
\end{quote}

The first main result gives a necessary and sufficient condition. If the clock image $\ImG$ is uniformly disconnected, then every compact-valued H\"older multifunction into an arbitrary metric space admits a selection with the same H\"older exponent. The selection can also be required to pass through any prescribed graph point, and its regularity is controlled by an explicit constant. Conversely, when the clock image is not uniformly disconnected, one can construct a compact-valued H\"older multifunction for which no selection has the required H\"older regularity.

This characterization shows that the selection problem is not governed only by the geometry of the values. What matters here is also how strongly the attainable clock values are separated from one another. Uniform disconnectedness provides exactly the separation needed across scales. This viewpoint is consistent with the close relation between uniformly disconnected spaces and ultrametric geometry \cite{DavidSemmes1997,TysonWu2005}, as well as with continuous-selection results on ultrametric domains without convexity assumptions \cite{StasyukTymchatyn2012}.

For left-continuous nondecreasing Stieltjes clocks, the geometric condition can be expressed directly in terms of the jumps of $g$. We introduce
\[
\kappa_g
=
\inf_{\substack{s<t\\g(s)<g(t)}}
\frac{\sup_{\tau\in[s,t)}\Delta^+g(\tau)}{g(t)-g(s)}.
\]
This quantity measures the fraction of a clock increment carried by its largest jump. One of the main results shows that
\[
\kappa_g>0
\]
is equivalent to uniform disconnectedness of $\ImG$. Thus the geometric condition has a simple interpretation: every positive clock increment must contain a jump carrying a fixed positive fraction of that increment.

Combining the two characterizations yields the main clock criterion. For $0<\alpha<1$, the condition $\kappa_g>0$ is equivalent to the universal $g$-H\"older selection property for compact-valued multifunctions over arbitrary metric target spaces. When $\kappa_g>0$, for every $\theta\in I$ and every $x_\theta\in F(\theta)$ there exists a $g$-H\"older selection $f$ with $f(\theta)=x_\theta$ and
\[
[f]_{\alpha,g}
\le
8\kappa_g^{-\alpha}[F]_{\alpha,g}.
\]
When $\kappa_g=0$, there exists a compact-valued $g$-H\"older multifunction with no $g$-H\"older selection. The same condition therefore determines both existence and quantitative control of the selection regularity.

The criterion distinguishes sharply between different jump structures. Finite-step clocks satisfy it, while infinite jump configurations may satisfy or fail it depending on how the jump sizes are distributed. The corresponding measure-theoretic interpretation is discussed later.

In summary, the paper links the H\"older selection problem for $0<\alpha<1$ to a directly identifiable property of the Stieltjes clock. Uniform disconnectedness of the clock image and uniform jump dominance are two forms of the same condition, and this condition determines whether H\"older regularity can be transferred from a compact-valued multifunction to a selection.

The paper is organized as follows. Section~\ref{sec:prelim} introduces the basic definitions and the reduction to the clock image. Section~\ref{sec:positive} proves the H\"older selection result under uniform disconnectedness, and Section~\ref{sec:necessity} establishes the converse and completes the characterization. Section~\ref{sec:clock} translates the geometric condition into the jump structure of a Stieltjes clock. Section~\ref{sec:examples} gives explicit examples, Section~\ref{sec:structural} studies the associated Stieltjes measure, and Section~\ref{sec:discussion} discusses the main consequences.

\section{Preliminaries}\label{sec:prelim}

Let $I=[a,b]$ with $a<b$, and let $(X,d)$ be a metric space. Denote by $\K(X)$ the family of all nonempty compact subsets of $X$. For $A,B\in\K(X)$, the Hausdorff distance is
\[
d_H(A,B)
=
\max\left\{
\sup_{x\in A}d(x,B),
\sup_{y\in B}d(y,A)
\right\}.
\]

For $\eta>0$ and $D\subset\mathbb R$, a subset $E\subset D$ is called an $\eta$-net of $D$ if, for every $x\in D$, there exists $y\in E$ such that $|x-y|\le\eta$.

We denote by $c_0(\mathbb R^2)$ the Banach space of all sequences $x=(x_n)_{n\ge1}$ with $x_n\in\mathbb R^2$ and $\|x_n\|\to0$, endowed with the supremum norm $\|x\|_\infty=\sup_{n\ge1}\|x_n\|$.

When $g:I\to\mathbb R$ is left-continuous and nondecreasing, for every $\tau\in[a,b)$ set
\[
\Delta^+g(\tau)=g(\tau+)-g(\tau),
\qquad
D_g=\{\tau\in[a,b):\Delta^+g(\tau)>0\}.
\]
Let $\mu_g$ denote the associated Lebesgue--Stieltjes measure, normalized, for every $s,t\in I$ with $s<t$, by
\[
\mu_g([s,t))=g(t)-g(s).
\]
Then $\mu_g(\{\tau\})=\Delta^+g(\tau)$ for every $\tau\in D_g$. We write
\[
\mu_g=\mu_g^a+\mu_g^c,
\qquad
\mu_g^a=\sum_{\tau\in D_g}\Delta^+g(\tau)\,\delta_\tau,
\]
where $\delta_\tau$ is the Dirac point mass at $\tau$ and $\mu_g^c$ is nonatomic.

Let $g:I\to\mathbb R$ have bounded image and let $\alpha>0$. We denote the image of $g$ by $\ImG=g(I)$. A map $F:I\to\K(X)$ is $g$-H\"older of exponent $\alpha$ if there exists $L\ge0$ such that, for every $s,t\in I$,
\[
d_H(F(s),F(t))
\le
L|g(t)-g(s)|^\alpha.
\]
The least such constant is denoted by $[F]_{\alpha,g}$. For a single-valued map $f:I\to X$, the notation $[f]_{\alpha,g}$ is defined analogously, with $d$ in place of $d_H$.

A selection of $F$ is a map $f:I\to X$ satisfying $f(t)\in F(t)$ for every $t\in I$. If $\theta\in I$ and $x_\theta\in F(\theta)$ are fixed, a selection satisfying $f(\theta)=x_\theta$ is called a prescribed-point selection.

\begin{lemma}[Clock-image factorization]\label{lem:factorization}
Let $\alpha>0$, let $g:I\to\mathbb R$, and suppose that $F:I\to\K(X)$ is $g$-H\"older of exponent $\alpha$. Then $F$ is constant on every level set of $g$. Consequently, there exists a unique map
\[
\widehat F:\ImG\to\K(X)
\]
such that $F=\widehat F\circ g$. Moreover,
\[
[\widehat F]_{\alpha,\ImG}=[F]_{\alpha,g},
\]
where
\[
[\widehat F]_{\alpha,\ImG}
=
\sup_{\substack{u,v\in\ImG\\u\ne v}}
\frac{d_H(\widehat F(u),\widehat F(v))}{|u-v|^\alpha}.
\]
\end{lemma}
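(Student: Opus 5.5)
The proof is a direct unwinding of the definitions, carried out in three steps.

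First, constancy on level sets. If $s,t\in I$ satisfy $g(s)=g(t)$, the $g$-H\"older inequality gives
\[
d_H(F(s),F(t))\le L\,|g(t)-g(s)|^\alpha = L\cdot 0^\alpha = 0,
\]
where $\alpha>0$ ensures $0^\alpha=0$. Since $F(s)$ and $F(t)$ are nonempty compact sets, $d_H$ is a genuine metric on $\K(X)$. Compactness gives closedness, so $\sup_{x\in F(s)} d(x,F(t))=0$ forces $F(s)\subset F(t)$, and symmetrically $F(t)\subset F(s)$. Hence $F(s)=F(t)$.

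Second, the factorization. For $u\in\ImG$, pick any $t\in I$ with $g(t)=u$ and set $\widehat F(u)=F(t)$. By the first step this does not depend on the choice of $t$, and $F=\widehat F\circ g$ holds by construction. Uniqueness follows because $g$ maps $I$ onto $\ImG$. If $G$ is another map with $F=G\circ g$, then for $u=g(t)$ we get $G(u)=F(t)=\widehat F(u)$.

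Third, equality of the constants. For $u\ne v$ in $\ImG$, choose $s,t$ with $g(s)=u$ and $g(t)=v$. Then
\[
d_H(\widehat F(u),\widehat F(v))=d_H(F(s),F(t))\le [F]_{\alpha,g}\,|u-v|^\alpha,
\]
so $[\widehat F]_{\alpha,\ImG}\le [F]_{\alpha,g}$.

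For the reverse inequality, take arbitrary $s,t\in I$. If $g(s)\ne g(t)$, then
\[
d_H(F(s),F(t))=d_H(\widehat F(g(s)),\widehat F(g(t)))\le [\widehat F]_{\alpha,\ImG}\,|g(t)-g(s)|^\alpha.
\]
If $g(s)=g(t)$, both sides vanish by the first step. So $[\widehat F]_{\alpha,\ImG}$ is an admissible constant for $F$, which gives $[F]_{\alpha,g}\le[\widehat F]_{\alpha,\ImG}$.

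One degenerate case needs a convention. If $\ImG$ is a single point, the supremum defining $[\widehat F]_{\alpha,\ImG}$ is over the empty set and is read as $0$. In that case $F$ is constant and $[F]_{\alpha,g}=0$, so the equality still holds.

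No step is a real obstacle. The points to get right are the use of $\alpha>0$ to obtain $0^\alpha=0$, and the fact that $d_H$ separates compact sets.
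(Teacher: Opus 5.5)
Your proof is correct and follows the paper's argument essentially step for step. You derive constancy on level sets from the vanishing H\"older bound, define $\widehat F(g(t)):=F(t)$, and get the two seminorm inequalities by transporting the H\"older estimate in each direction; your extra details (uniqueness, why $d_H=0$ forces equality of compact sets, and the $\sup\varnothing=0$ convention when $\ImG$ is a singleton) only make explicit what the paper leaves implicit.
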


\begin{proof}
If $g(s)=g(t)$, then
\[
d_H(F(s),F(t))
\le
[F]_{\alpha,g}|g(t)-g(s)|^\alpha=0,
\]
so $F(s)=F(t)$. Therefore $\widehat F(g(t)):=F(t)$ is well defined and $F=\widehat F\circ g$.

For $u,v\in\ImG$, choose $s,t\in I$ with $u=g(s)$ and $v=g(t)$. Then
\[
d_H(\widehat F(u),\widehat F(v))
=
d_H(F(s),F(t))
\le
[F]_{\alpha,g}|u-v|^\alpha,
\]
which gives $[\widehat F]_{\alpha,\ImG}\le [F]_{\alpha,g}$. The reverse inequality follows by applying the H\"older estimate for $\widehat F$ to $g(s)$ and $g(t)$. Hence the seminorms are equal.
\end{proof}

\begin{remark}[Selections under the clock-image reduction]\label{rem:selection-factorization}
The same factorization applies to single-valued $g$-H\"older maps. In particular, if $\widehat f:\ImG\to X$ satisfies $\widehat f(u)\in\widehat F(u)$ for all $u\in\ImG$, then $f=\widehat f\circ g$ is a selection of $F$ and
\[
[f]_{\alpha,g}=[\widehat f]_{\alpha,\ImG}.
\]
If $u_\theta=g(\theta)$ and $\widehat f(u_\theta)=x_\theta$, then $f(\theta)=x_\theta$.
\end{remark}

\begin{definition}[Uniform disconnectedness]\label{def:uniform-disconnected}
Following the standard chain formulation of uniform disconnectedness \cite{DavidSemmes1997,TysonWu2005}, let $D\subset\mathbb R$ and $c\in(0,1]$. The set $D$ is called $c$-uniformly disconnected if, for every pair of distinct points $u,v\in D$ and every finite chain $u=u_0,u_1,\ldots,u_N=v$ with $u_i\in D$, one has
\[
\max_{1\le i\le N}|u_i-u_{i-1}|
\ge
c|u-v|.
\]
The set $D$ is uniformly disconnected if it is $c$-uniformly disconnected for some $c>0$.
\end{definition}

We introduce the \emph{chain bottleneck} to quantify the smallest possible value of the largest step along a finite chain joining two points. Every such chain must contain at least one step of this size.

\begin{definition}[Chain bottleneck]\label{def:bottleneck}
Let $D\subset\mathbb R$. For every $u,v\in D$, define
\[
\beta_D(u,v)
=
\inf\left\{
\max_{1\le i\le N}|u_i-u_{i-1}|:
 u=u_0,\ldots,u_N=v,\ u_i\in D
\right\},
\]
with $\beta_D(u,u)=0$. Thus $D$ is $c$-uniformly disconnected precisely when, for every $u,v\in D$,
\[
\beta_D(u,v)\ge c|u-v|.
\]
\end{definition}

The next elementary proposition records the standard chain-to-ultrametric mechanism associated with uniform disconnectedness; compare \cite{DavidSemmes1997,TysonWu2005}.

\begin{proposition}[The bottleneck ultrametric]\label{prop:bottleneck-ultrametric}
Let $D\subset\mathbb R$ be $c$-uniformly disconnected for some $c\in(0,1]$. Then $\beta_D$ is an ultrametric on $D$, and for every $u,v\in D$,
\[
c|u-v|\le\beta_D(u,v)\le|u-v|.
\]
Moreover, for every $\alpha>0$, the function $\delta_\alpha(u,v)=\beta_D(u,v)^\alpha$ is an ultrametric on $D$ and for every $u,v\in D$,
\[
c^\alpha|u-v|^\alpha
\le
\delta_\alpha(u,v)
\le
|u-v|^\alpha.
\]
\end{proposition}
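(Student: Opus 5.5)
The plan is to verify the axioms for $\beta_D$ directly from the chain definition, and then pass to $\delta_\alpha$ by monotonicity of $t\mapsto t^\alpha$.

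\textbf{Two-sided bound.} The trivial one-step chain $u_0=u$, $u_1=v$ is admissible, so $\beta_D(u,v)\le|u-v|$. Conversely, $c$-uniform disconnectedness says that every admissible chain has maximal step at least $c|u-v|$. Taking the infimum over chains gives $\beta_D(u,v)\ge c|u-v|$.

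\textbf{Ultrametric axioms.} Nonnegativity is clear, and $\beta_D(u,u)=0$ holds by convention. For $u\ne v$ the lower bound gives $\beta_D(u,v)\ge c|u-v|>0$, because $c>0$. Symmetry follows by reversing chains, which preserves the set of step lengths.

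For the strong triangle inequality, fix $u,v,w\in D$ and $\varepsilon>0$. Choose a chain from $u$ to $w$ with maximal step below $\beta_D(u,w)+\varepsilon$, and a chain from $w$ to $v$ with maximal step below $\beta_D(w,v)+\varepsilon$. Their concatenation is an admissible chain from $u$ to $v$, and its maximal step is the maximum of the two maxima. Hence
\[
\beta_D(u,v)\le\max\{\beta_D(u,w),\beta_D(w,v)\}+\varepsilon,
\]
and letting $\varepsilon\to0$ gives the claim. The degenerate cases, where some of the points coincide, are immediate from $\beta_D(u,u)=0$.

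\textbf{Passing to $\delta_\alpha$.} Since $t\mapsto t^\alpha$ is strictly increasing on $[0,\infty)$ and vanishes only at $0$, composing it with an ultrametric gives an ultrametric. The key point is that $\max\{a,b\}^\alpha=\max\{a^\alpha,b^\alpha\}$, so the strong triangle inequality is preserved. Raising the two-sided bound to the power $\alpha$ yields
\[
c^\alpha|u-v|^\alpha\le\delta_\alpha(u,v)\le|u-v|^\alpha.
\]

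There is no real obstacle here. The only step needing care is the concatenation argument with $\varepsilon$-approximate chains, since the infimum defining $\beta_D$ need not be attained. Note also that no power restriction such as $\alpha\le1$ is needed, precisely because ultrametrics are stable under increasing reparametrizations that fix $0$.
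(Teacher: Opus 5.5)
Your proposal is correct and follows essentially the same route as the paper: the one-step chain gives the upper bound, uniform disconnectedness gives the lower bound (hence positivity off the diagonal), and concatenating $\varepsilon$-approximate chains gives the strong triangle inequality. As in the paper, the bounds for $\delta_\alpha$ follow by raising everything to the power $\alpha$. Your extra remarks on symmetry via chain reversal and on $\max\{a,b\}^\alpha=\max\{a^\alpha,b^\alpha\}$ make explicit what the paper leaves as immediate.
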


\begin{proof}
The upper bound follows from the one-step chain $u,v$, and the lower bound is exactly the $c$-uniform disconnectedness condition. Symmetry is immediate.

Let $u,v,w\in D$ and $\varepsilon>0$. Choose a chain from $u$ to $v$ whose largest step is smaller than $\beta_D(u,v)+\varepsilon$ and a chain from $v$ to $w$ whose largest step is smaller than $\beta_D(v,w)+\varepsilon$. Concatenating the chains gives
\[
\beta_D(u,w)
\le
\max\{\beta_D(u,v)+\varepsilon,\beta_D(v,w)+\varepsilon\}.
\]
Letting $\varepsilon\downarrow0$ yields the strong triangle inequality. Finally, the lower bound implies $\beta_D(u,v)>0$ whenever $u\ne v$, so $\beta_D$ is an ultrametric. Since a positive power preserves the strong triangle inequality, $\delta_\alpha=\beta_D^\alpha$ is also an ultrametric, and the stated bounds follow by raising the preceding two-sided estimate to the power $\alpha$.
\end{proof}

Without uniform disconnectedness, $\beta_D$ need not separate distinct points; for example, two points of an interval can be joined by chains whose maximal step goes to zero.

\section{Uniformly disconnected domains and quantitative prescribed-point selections}\label{sec:positive}

The ultrametric structure in Proposition~\ref{prop:bottleneck-ultrametric} yields controlled selections. We begin with a finite prescribed-point selection lemma.

\begin{lemma}[Finite prescribed-point selection on an ultrametric set]\label{lem:finite-ultrametric-selection}
Let $(E,\delta)$ be a nonempty finite ultrametric space, let $(X,d)$ be a metric space, and let $F:E\to\K(X)$. Suppose that, for some $K\ge0$, the following estimate holds for every $u,v\in E$:
\[
d_H(F(u),F(v))\le K\delta(u,v).
\]
Fix $u_*\in E$ and $x_*\in F(u_*)$. Then there exists a selection $f:E\to X$ such that $f(u_*)=x_*$ and for every $u,v\in E$,
\[
d(f(u),f(v))\le8K\delta(u,v).
\]
\end{lemma}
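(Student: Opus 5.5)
Since $E$ is finite and $\delta$ is an ultrametric, the closed balls $B(u,r)=\{v\in E:\delta(u,v)\le r\}$ form a finite nested hierarchy. Any two such balls are either disjoint or nested, and two points in the same ball of radius $r$ are at distance at most $r$. The plan is to build $f$ by recursion down this hierarchy, starting from the root and the prescribed point $(u_*,x_*)$. Let $0=r_0<r_1<\dots<r_m$ be the distinct values of $\delta$ on $E\times E$. For each level $k$, the balls of radius $r_k$ partition $E$, and each ball of radius $r_k$ is a disjoint union of balls of radius $r_{k-1}$. At level $m$ the whole of $E$ is a single ball, and at level $0$ the balls are singletons.

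Attach to each ball $B$ a \emph{center} $c_B\in B$ together with a point $y_B\in F(c_B)$, and proceed top-down. For the root ball $E$, take $c_E=u_*$ and $y_E=x_*$. Now let $B$ be a ball of radius $r_k$ whose pair $(c_B,y_B)$ is already defined, and let $B'$ be one of its children, a ball of radius $r_{k-1}$. If $c_B\in B'$, set $c_{B'}=c_B$ and $y_{B'}=y_B$. Otherwise pick any $c_{B'}\in B'$, and use compactness of $F(c_{B'})$ to choose $y_{B'}\in F(c_{B'})$ with
\[
d(y_B,y_{B'})=d\bigl(y_B,F(c_{B'})\bigr)\le d_H\bigl(F(c_B),F(c_{B'})\bigr)\le K\delta(c_B,c_{B'})\le K r_k .
\]
Since the singleton balls at level $0$ are the points themselves, we then define $f(u)=y_{\{u\}}$. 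This gives $f(u)\in F(u)$. The prescribed point is preserved along the branch of $u_*$, because $u_*$ is the center at every level of that branch, so $f(u_*)=x_*$.

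For the estimate, fix $u\ne v$ and let $r_k=\delta(u,v)$, so that $u,v$ lie in a common ball $B$ of radius $r_k$. Following the chain of balls from $B$ down to $\{u\}$, each step from level $j$ to level $j-1$ moves the selected point by at most $Kr_j$. Hence $d(y_B,f(u))\le K\sum_{j\le k}r_j$, and likewise for $v$. This gives $d(f(u),f(v))\le 2K\sum_{j\le k}r_j$.

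\textbf{Main obstacle.} The sum $\sum_{j\le k} r_j$ need not be comparable to $r_k$ when the distance values are closely spaced. The naive hierarchy therefore does not give a uniform constant, and this step is the crux. To repair it, I would coarsen the hierarchy to dyadic scales. Only use levels $\rho_n=2^{-n}R$, where $R=\max\delta$, and let the children of a ball of radius $\rho_n$ be its sub-balls of radius $\rho_{n+1}$; all of these are genuine ultrametric balls, so the nesting is preserved. Balls of radius below the smallest positive distance are singletons, so the recursion terminates. With this choice, a step from radius $\rho_n$ costs at most $K\rho_n$. The descent from the smallest dyadic ball containing both $u$ and $v$, which has radius $\rho_n<2\delta(u,v)$, then costs at most $K\sum_{j\ge n}\rho_j=2K\rho_n<4K\delta(u,v)$ on each side. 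Summing the two sides gives $d(f(u),f(v))\le 8K\delta(u,v)$. In the boundary case $\delta(u,v)=R$, the common ball is $E$ itself with $\rho_0=R$, and the same bound holds. The care needed is in checking that the first common dyadic ball has radius $\rho_n<2\delta(u,v)$, and that centers inherited from parent balls are still in place so the prescribed point is not lost.
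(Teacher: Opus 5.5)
Your final dyadic argument is correct and is essentially the paper's proof. It uses clusters at scales $2^{-k}R$ and centers inherited along the branch of $u_*$. Each nearest-point choice costs at most $K$ times the parent radius, and the geometric sum gives $2K\rho_n$ per side with $\rho_n<2\delta(u,v)$, hence $8K\delta(u,v)$. The preliminary hierarchy indexed by distinct distance values is only a detour, though you correctly diagnose why it fails. The case $\delta(u,v)=R$ needs no separate treatment, since $\rho_0=R<2\delta(u,v)$ already holds.
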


\begin{proof}
If $E=\{u_*\}$, the assertion is immediate. Assume $|E|\ge2$ and set
\[
R=\diam_\delta(E)>0,
\qquad r_k=2^{-k}R\quad(k\ge0).
\]
Choose $k_0$ so large that
\[
r_{k_0}<\min_{u\ne v}\delta(u,v).
\]
For $k=0,1,\ldots,k_0$, declare $u,v\in E$ equivalent when $\delta(u,v)\le r_k$. Since $\delta$ is an ultrametric, these are equivalence relations. Let $\mathcal C_k$ be the family of the corresponding equivalence classes, which we call clusters. The families are nested, and every cluster in $\mathcal C_{k_0}$ is a singleton.

For each cluster $C$, choose a representative $r_C\in C$, with $r_C=u_*$ whenever $u_*\in C$. We assign a point $x_C\in F(r_C)$ recursively through these nested families. At level $0$, the unique cluster is $E$, and we set $x_E=x_*$. Suppose $P\in\mathcal C_k$ has already been assigned $x_P\in F(r_P)$ and let $C\in\mathcal C_{k+1}$ satisfy $C\subset P$. If $u_*\in C$, set $x_C=x_*$. Otherwise, compactness of $F(r_C)$ allows us to choose $x_C\in F(r_C)$ with
\[
d(x_C,x_P)=d(x_P,F(r_C)).
\]
Since $r_C,r_P\in P$,
\[
\delta(r_C,r_P)\le r_k,
\]
and hence
\[
d(x_C,x_P)
\le
d_H(F(r_C),F(r_P))
\le
Kr_k.
\]
For $u\in E$, the level-$k_0$ cluster containing $u$ is $\{u\}$; define
\[
f(u)=x_{\{u\}}.
\]
Then $f(u)\in F(u)$ and $f(u_*)=x_*$.

Let $u\ne v$, and let $m$ be the largest level at which $u$ and $v$ belong to the same cluster. They lie in distinct clusters at level $m+1$, so
\[
\delta(u,v)>r_{m+1}=\frac{r_m}{2},
\qquad r_m<2\delta(u,v).
\]
Following the nested clusters containing $u$ from their common level-$m$ cluster down to the singleton $\{u\}$, the total displacement is bounded by
\[
Kr_m+Kr_{m+1}+\cdots\le2Kr_m,
\]
and the same bound holds for the nested clusters leading to $\{v\}$. Therefore
\[
d(f(u),f(v))\le4Kr_m<8K\delta(u,v).
\]
\end{proof}

\begin{remark}\label{rem:constant-eight}
The factor $8$ is the explicit bound produced by the dyadic tree construction; no sharpness claim is made for the selection problem itself. If consecutive scales have ratio $q\in(0,1)$, the same calculation gives $2/[q(1-q)]$, which is minimized at $q=1/2$ within this geometric-scale construction.
\end{remark}

\begin{theorem}[Quantitative prescribed-point selection]\label{thm:positive-selection}
Let $D\subset\mathbb R$ be $c$-uniformly disconnected, let $(X,d)$ be a metric space, and let $\alpha>0$. Suppose that $F:D\to\K(X)$ satisfies, for every $u,v\in D$,
\[
d_H(F(u),F(v))\le L|u-v|^\alpha.
\]
Then, for every $u_*\in D$ and $x_*\in F(u_*)$, there exists a selection $f:D\to X$ satisfying $f(u_*)=x_*$ and for every $u,v\in D$,
\[
d(f(u),f(v))
\le
8c^{-\alpha}L|u-v|^\alpha.
\]
Consequently,
\[
[f]_{\alpha,D}
\le
8c^{-\alpha}[F]_{\alpha,D}.
\]
\end{theorem}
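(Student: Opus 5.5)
The plan is to equip $D$ with the ultrametric $\delta_\alpha=\beta_D^\alpha$ from Proposition~\ref{prop:bottleneck-ultrametric}. We apply the finite selection Lemma~\ref{lem:finite-ultrametric-selection} on finite subsets, and then pass to the full (possibly infinite) set $D$ by a compactness argument of Tychonoff type.

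\textbf{The Lipschitz bound in $\delta_\alpha$.} By Proposition~\ref{prop:bottleneck-ultrametric}, $|u-v|^\alpha\le c^{-\alpha}\delta_\alpha(u,v)$. Hence for all $u,v\in D$
\[
d_H(F(u),F(v))\le L|u-v|^\alpha\le c^{-\alpha}L\,\delta_\alpha(u,v),
\]
so $F$ is $K$-Lipschitz with respect to $\delta_\alpha$, where $K=c^{-\alpha}L$.

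\textbf{Finite subsets.} Fix a finite $E\subset D$ containing $u_*$. The restriction of $\delta_\alpha$ to $E$ is still an ultrametric, so Lemma~\ref{lem:finite-ultrametric-selection} gives a selection $f_E$ on $E$ with $f_E(u_*)=x_*$ and
\[
d(f_E(u),f_E(v))\le 8K\delta_\alpha(u,v)\le 8c^{-\alpha}L|u-v|^\alpha ,
\]
using the upper bound $\delta_\alpha\le|u-v|^\alpha$. Note that restricting to $E$ does not require recomputing $\beta_E$: we simply use $\delta_\alpha$ from the whole of $D$ restricted to $E$.

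\textbf{Passage to all of $D$ (the main obstacle).} Set $P=\prod_{u\in D}F(u)$ with the product topology; it is compact by Tychonoff's theorem because every $F(u)$ is compact. For each finite $E\ni u_*$, let $S_E\subset P$ be the set of $x\in P$ with $x_{u_*}=x_*$ and
\[
d(x_u,x_v)\le 8c^{-\alpha}L|u-v|^\alpha \quad\text{for all } u,v\in E.
\]
Each $S_E$ is closed, since these are finitely many closed conditions on finitely many coordinates. Each $S_E$ is nonempty: extend $f_E$ arbitrarily outside $E$ by choosing any point of $F(u)$. The family has the finite intersection property, because $S_{E_1}\cap\dots\cap S_{E_n}\supset S_{E_1\cup\dots\cup E_n}\neq\emptyset$. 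By compactness, $\bigcap_E S_E\neq\emptyset$. Any point $f$ of this intersection is a selection with $f(u_*)=x_*$ satisfying the estimate for every pair $u,v\in D$, since every pair lies in some finite $E\ni u_*$.

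\textbf{Seminorm bound.} Taking $L=[F]_{\alpha,D}$ gives $[f]_{\alpha,D}\le 8c^{-\alpha}[F]_{\alpha,D}$. This uses that the infimum defining the seminorm is attained, which holds by the definition as a supremum of ratios.

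If one prefers to avoid the axiom of choice, a countable-dense-subset and diagonal argument could be used instead. Extending from a dense set, however, is problematic for non-complete $X$ and for points outside the closure, so the Tychonoff argument is the one I would commit to.
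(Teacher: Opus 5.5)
Your proposal is correct and follows the paper's proof: you use the ultrametric $\beta_D^\alpha$, apply Lemma~\ref{lem:finite-ultrametric-selection} on finite sets containing $u_*$, and then use Tychonoff compactness of $\prod_{u\in D}F(u)$ with the finite intersection property. The only difference is cosmetic: you index the closed constraint sets by finite sets $E$, whereas the paper indexes them by pairs $(u,v)$ together with the prescribed-point condition.
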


\begin{proof}
For every $u,v\in D$, Proposition~\ref{prop:bottleneck-ultrametric} gives
\[
c|u-v|\le\beta_D(u,v)\le|u-v|.
\]
Define $\delta:D\times D\to[0,\infty)$ by $\delta(u,v)=\beta_D(u,v)^\alpha$. Then $\delta$ is an ultrametric and for every $u,v\in D$,
\[
|u-v|^\alpha\le c^{-\alpha}\delta(u,v).
\]
Thus, for every $u,v\in D$,
\[
d_H(F(u),F(v))
\le
Lc^{-\alpha}\delta(u,v).
\]
For every finite $E\subset D$ containing $u_*$, Lemma~\ref{lem:finite-ultrametric-selection} gives a prescribed-point selection $f_E:E\to X$ satisfying, for every $u,v\in E$,
\[
d(f_E(u),f_E(v))
\le
8Lc^{-\alpha}\delta(u,v)
\le
8Lc^{-\alpha}|u-v|^\alpha.
\]

To extend this construction to the whole domain, consider the product
\[
\mathcal P=\prod_{u\in D}F(u)
\]
with the product topology; its elements are maps $f:D\to X$ satisfying $f(u)\in F(u)$ for every $u\in D$. It is compact by Tychonoff's theorem. For distinct $u,v\in D$, let
\[
C_{u,v}
=
\{f\in\mathcal P:
 d(f(u),f(v))\le8c^{-\alpha}L|u-v|^\alpha\},
\]
and let
\[
C_*=\{f\in\mathcal P:f(u_*)=x_*\}.
\]
These sets are closed because the coordinate projections are continuous in the product topology and the distance function is continuous. Every finite subfamily of these constraints involves only finitely many coordinates; after adjoining $u_*$ if necessary, Lemma~\ref{lem:finite-ultrametric-selection} supplies the required values on those coordinates, while arbitrary values may be selected in the remaining nonempty fibers. Hence the family has the finite intersection property. Compactness gives
\[
C_*\cap\bigcap_{u\ne v}C_{u,v}\ne\varnothing,
\]
and any element of this intersection is the desired global selection.
\end{proof}

\section{Necessity and the H\"older-selection characterization}\label{sec:necessity}

For $0<\alpha<1$, let $H:[0,1]\to\K(\mathbb R^2)$ denote, throughout this section, the Chistyakov--Galkin counterexample from Proposition~8.2 of \cite{ChistyakovGalkin1998}, after an affine change of the parameter interval. Thus, $H$ is an $\alpha$-H\"older multifunction that admits no continuous selection. Write $L_H=[H]_\alpha<\infty$. Its values are contained in the unit circle, so
\[
K_H:=\bigcup_{t\in[0,1]}H(t)
\]
is bounded. Set
\[
R_H:=\sup_{y\in K_H}\|y\|.
\]

\begin{lemma}[Finite-net obstruction]\label{lem:finite-net-obstruction}
For every $M>0$ there exists $\eta_M>0$ such that, whenever $E\subset[0,1]$ is a finite $\eta_M$-net of $[0,1]$ containing $0$ and $1$, no selection $h:E\to\mathbb R^2$ of $H|_E$ can satisfy
\[
[h]_{\alpha,E}\le M.
\]
\end{lemma}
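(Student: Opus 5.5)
Argue by contradiction and compactness. Suppose the lemma fails for some $M>0$. Then for every $n$ there is a finite $(1/n)$-net $E_n\subset[0,1]$ containing $0$ and $1$, and a selection $h_n:E_n\to\mathbb R^2$ of $H|_{E_n}$ with $[h_n]_{\alpha,E_n}\le M$. The plan is to pass to a limit of the maps $h_n$ and obtain a continuous (indeed $\alpha$-H\"older) selection of $H$ on all of $[0,1]$. This contradicts the defining property of the Chistyakov--Galkin multifunction $H$.

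\textbf{Step 1: extend each $h_n$ to $[0,1]$.} Define $\tilde h_n$ by piecewise-linear interpolation between consecutive points of $E_n$. Since $0,1\in E_n$, this is defined on all of $[0,1]$. Consecutive points of $E_n$ are at distance at most $2/n$, by the net property. The values of $\tilde h_n$ lie in the closed ball of radius $R_H$, by convexity, since $h_n(t)\in H(t)\subset K_H$.

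\textbf{Step 2: uniform equicontinuity.} Show that the family $\tilde h_n$ is equicontinuous, say with modulus $\omega(r)=3M r^\alpha$ or a similar bound. For $s<t$ in $[0,1]$, let $s',t'$ be the nearest grid points of $E_n$ on the appropriate sides. Then $|\tilde h_n(s)-\tilde h_n(t)|$ is bounded by contributions from the two partial segments plus $M|s'-t'|^\alpha$. On a single segment $[e,e']$, the interpolant satisfies $|\tilde h_n(s)-\tilde h_n(t)|\le M|e-e'|^{\alpha-1}|s-t|\le M|s-t|^\alpha$ for $s,t\in[e,e']$, using $\alpha<1$. Chaining at most three such pieces gives a bound $\le 3M|s-t|^\alpha$. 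This is the step where some routine care is needed. Alternatively, a weaker modulus suffices, since only continuity of the limit is needed.

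\textbf{Step 3: limit and verification.} By Arzel\`a--Ascoli, a subsequence converges uniformly to a continuous $h:[0,1]\to\mathbb R^2$. It remains to check that $h(t)\in H(t)$ for every $t$; this is the expected main obstacle, because $\tilde h_n(t)$ is not itself in $H(t)$. Fix $t$ and choose $e_n\in E_n$ with $|e_n-t|\le 1/n$. Then
\[
d(\tilde h_n(t),H(t))\le |\tilde h_n(t)-h_n(e_n)|+d_H(H(e_n),H(t))\le 3Mn^{-\alpha}+L_Hn^{-\alpha}\to0 .
\]
Uniform convergence and closedness of the compact set $H(t)$ then give $h(t)\in H(t)$. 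Thus $h$ is a continuous selection of $H$, a contradiction. Hence some $\eta_M=1/n$ works. Any finer net is also a $1/n$-net, so the conclusion holds for all finite $\eta_M$-nets containing $0$ and $1$.
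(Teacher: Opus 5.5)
Your proof is correct. The overall strategy is the paper's: negate the statement, extract a limit, obtain a continuous selection of $H$, and contradict Chistyakov--Galkin. What differs is the compactness device.

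The paper never extends $h_n$ off $E_n$. It fixes a countable dense $Q\subset[0,1]$, picks $q_n\in E_n$ with $|q_n-q|\le 1/n$, and extracts a diagonal subsequence so that $h_n(q_n)\to h(q)$. It then passes the bound $M|q-r|^\alpha$ to the limit on $Q$ and extends $h$ to $[0,1]$ by uniform continuity and completeness of $\mathbb R^2$. The selection property is checked first on $Q$ and then on the closure. Because the approximants $h_n(q_n)$ already lie in $H(q_n)$, membership in $H(q)$ follows at once from $d_H(H(q_n),H(q))\le L_H n^{-\alpha}$. That argument uses only boundedness plus completeness of the target, not its linear structure, and places no restriction on $\alpha$.

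Your route instead uses the vector structure of $\mathbb R^2$, through piecewise-linear interpolation, and $\alpha\le 1$, in the one-segment bound $M|e'-e|^{\alpha-1}|s-t|\le M|s-t|^\alpha$. Since $\tilde h_n(t)\notin H(t)$ in general, you also need the extra estimate $d(\tilde h_n(t),H(t))\le (3M+L_H)n^{-\alpha}$. In exchange, Arzel\`a--Ascoli gives you a continuous limit on all of $[0,1]$ in one step, with no dense set, no diagonal extraction, and no separate extension step.

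Your individual estimates hold:
\begin{itemize}
\item consecutive points of a $1/n$-net containing $0,1$ are at most $2/n$ apart;
\item the three-piece chaining through $s\le s'\le t'\le t$ gives the uniform modulus $3Mr^\alpha$;
\item your closing remark about finer nets is harmless but unnecessary, since the contradiction already shows that $\eta_M=1/n$ works for some $n$.
\end{itemize}
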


\begin{proof}
Suppose the conclusion fails for some $M>0$. Then for every $n$ there is a finite $1/n$-net $E_n\subset[0,1]$, containing $0$ and $1$, and a selection $h_n:E_n\to\mathbb R^2$ with $[h_n]_{\alpha,E_n}\le M$.

Let $Q\subset[0,1]$ be countable and dense. For each $q\in Q$, choose $q_n\in E_n$ with $|q_n-q|\le1/n$. Since $h_n(q_n)\in K_H$ and $K_H$ is bounded, a diagonal subsequence defines a map $h:Q\to\mathbb R^2$ such that, for every $q\in Q$,
\[
h_n(q_n)\to h(q).
\] For $q,r\in Q$,
\[
\|h_n(q_n)-h_n(r_n)\|
\le
M|q_n-r_n|^\alpha,
\]
so
\[
\|h(q)-h(r)\|\le M|q-r|^\alpha.
\]
Moreover,
\[
d(h_n(q_n),H(q))
\le
d_H(H(q_n),H(q))
\le
L_H|q_n-q|^\alpha\to0,
\]
and therefore $h(q)\in H(q)$ because $H(q)$ is closed.

Since $\mathbb R^2$ is complete, the H\"older estimate extends $h$ uniquely from $Q$ to a continuous, indeed $\alpha$-H\"older, map $h:[0,1]\to\mathbb R^2$. If $q_k\to t$, then $h(q_k)\to h(t)$ and $d_H(H(q_k),H(t))\to0$, so closedness of $H(t)$ yields $h(t)\in H(t)$. This gives a continuous selection of $H$, a contradiction.
\end{proof}

\begin{lemma}[Fine normalized chains]\label{lem:fine-chains}
Let $D\subset\mathbb R$ be bounded and not uniformly disconnected. Then, for every $\eta>0$, there exist $a,b\in D$ with $a<b$ and a finite set $E\subset D\cap[a,b]$ containing $a$ and $b$ such that
\[
\widetilde E
=
\left\{\frac{u-a}{b-a}:u\in E\right\}
\]
is an $\eta$-net of $[0,1]$.
\end{lemma}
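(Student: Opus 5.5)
Since $D$ is not uniformly disconnected, no $c>0$ works in Definition~\ref{def:uniform-disconnected}. In particular, for $c=\eta/2$ there are distinct $u,v\in D$ and a finite chain $u=u_0,u_1,\ldots,u_N=v$ in $D$ with
\[
\max_{1\le i\le N}|u_i-u_{i-1}|<\tfrac{\eta}{2}|u-v|.
\]
Relabel if necessary so that $u<v$. The first idea would be to take $a=u$, $b=v$ and let $E$ be the set of chain points. The difficulty is that the chain may leave the interval $[u,v]$, whereas we need $E\subset D\cap[a,b]$.

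To fix this, replace the chain by its intersection with $[u,v]$. Set $E=\{u_i:u\le u_i\le v\}$, which is finite and contains $u$ and $v$. The claim is that consecutive points of $E$, ordered increasingly, are at distance less than $\tfrac{\eta}{2}(v-u)$. Take a point $w\in[u,v]$ and suppose the open gap $(w-\rho,w+\rho)\cap[u,v]$ contains no point of $E$. Since the chain starts at $u$ and ends at $v$, there is a first index $i$ with $u_{i-1}\le w<u_i$ or the reverse crossing. Every chain step crosses $w$ with step length less than $\tfrac{\eta}{2}(v-u)$. Write $\epsilon=\tfrac{\eta}{2}(v-u)$. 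At a crossing step the two endpoints lie within $\epsilon$ of each other on opposite sides of $w$. Each endpoint either lies in $[u,v]$, and hence in $E$, or lies outside $[u,v]$; but an endpoint outside $[u,v]$ lies beyond $u$ or $v$ and so is farther from $w$ than those points. In either case, the point of $E\cup\{u,v\}$ on each side of $w$ is within distance $\epsilon$ of $w$.

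Cleanly stated: for any $x\in[u,v]$, consider a step $[u_{i-1},u_i]$ (as an interval) containing $x$. Such a step exists because the union of the step intervals is connected and contains $u$ and $v$. Since its length is less than $\epsilon$, either one of its endpoints in $[u,v]$ is within $\epsilon$ of $x$, or both endpoints lie outside $[u,v]$ on opposite sides. The latter would force $v-u<\epsilon<v-u$, which is impossible since $\eta\le1$; we may assume this without loss, as a larger $\eta$ is weaker. If one endpoint lies outside, say below $u$, then $x-u<\epsilon$ and $u\in E$. Hence $E$ is an $\epsilon$-net of $[u,v]$, with $\epsilon<\eta(v-u)$.

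Finally, normalize with $a=u$ and $b=v$. The affine map $t\mapsto (t-a)/(b-a)$ scales distances by $1/(b-a)$, so $\widetilde E$ is an $\eta/2$-net, and hence an $\eta$-net, of $[0,1]$. Boundedness of $D$ is not needed beyond the finiteness of the chain.

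The main obstacle is the step keeping $E$ inside $[a,b]$. It is resolved by the interval-covering argument above: the steps of the chain cover $[u,v]$ by connectedness, and truncating them at $u$ and $v$ only shortens the gaps.
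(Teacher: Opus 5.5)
Your argument is correct, but it keeps the chain inside $[a,b]$ in the opposite way from the paper.

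The paper does not truncate the chain. It enlarges the interval instead: it takes $a=\min_i z_i$ and $b=\max_i z_i$, the extreme points of the chain. Then every chain point lies in $D\cap[a,b]$ automatically, and $b-a\ge|u-v|$. Each gap between consecutive sorted chain points is jumped by a single step, so it is smaller than $\varepsilon|u-v|\le\varepsilon(b-a)$. Choosing $\varepsilon<\eta$ finishes the proof, with no covering argument and no restriction on $\eta$.

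You instead keep $a=u$, $b=v$ and discard the chain points outside $[u,v]$. This forces you to check that truncation opens no large holes. Your covering argument does this correctly: the step intervals form a connected union containing $[u,v]$. The only extra case is a single step jumping over all of $[u,v]$, and your reduction to $\eta\le1$ excludes it ($\eta\le2$ would already suffice).

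Your route gives a slightly sharper conclusion: the endpoints can be the original pair $u,v$, with $b-a=|u-v|$. The obstruction theorem does not need this, since only the net property on $[a_n,b_n]$ matters there. So the paper's enlargement is the more economical choice.

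Your first attempt already works without the covering argument, and it is exactly the paper's mechanism. No chain point lies strictly between two consecutive points of $E$: points inside $[u,v]$ belong to $E$, and points outside cannot lie in a subinterval of $[u,v]$. Hence some single step has one endpoint at or below the smaller point and the other at or above the larger one, which bounds the gap by the maximal step.
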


\begin{proof}
Since $D$ is not uniformly disconnected, for every $\varepsilon>0$ there exist distinct $u,v\in D$ and a finite chain $u=z_0,z_1,\ldots,z_N=v$ with $z_i\in D$ for every $i=0,\ldots,N$ such that
\[
\max_i|z_i-z_{i-1}|<\varepsilon|u-v|.
\]
Let
\[
a=\min_i z_i,
\qquad b=\max_i z_i.
\]
Then $b-a\ge|u-v|$. Arrange the distinct chain points increasingly,
\[
a=w_0<w_1<\cdots<w_m=b.
\]
Set $E=\{w_0,\ldots,w_m\}\subset D\cap[a,b]$. For each $j$, the original chain must cross the gap $(w_{j-1},w_j)$ in one adjacent step, and therefore
\[
w_j-w_{j-1}
\le
\max_i|z_i-z_{i-1}|
<
\varepsilon(b-a).
\]
Choosing $\varepsilon<\eta$ shows that the normalized set has consecutive gaps smaller than $\eta$ and contains $0,1$; hence it is an $\eta$-net.
\end{proof}

\begin{theorem}[A H\"older-selection obstruction]\label{thm:single-obstruction}
Let $0<\alpha<1$, and let $D\subset\mathbb R$ be nonempty, bounded, and not uniformly disconnected. Then there exists an $\alpha$-H\"older map
\[
\mathcal F:D\to\K\bigl(c_0(\mathbb R^2)\bigr)
\]
that admits no $\alpha$-H\"older selection.
\end{theorem}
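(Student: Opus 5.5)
The plan is to glue together infinitely many rescaled copies of the Chistyakov--Galkin multifunction $H$, each placed on a fine normalized chain of $D$ supplied by Lemma~\ref{lem:fine-chains}, one per coordinate of $c_0(\mathbb R^2)$. Each copy is weighted so that the glued map stays $\alpha$-H\"older, while the $n$-th copy still forces any selection to have H\"older constant at least of order $n$.

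\textbf{Step 1: the blocks.} For each $n\ge1$, set $M_n=n$ and let $\eta_n=\eta_{M_n}$ be given by Lemma~\ref{lem:finite-net-obstruction}. Lemma~\ref{lem:fine-chains} then gives $a_n<b_n$ in $D$ and a finite set $E_n\subset D\cap[a_n,b_n]$ whose normalization $\widetilde E_n$ is an $\eta_n$-net of $[0,1]$ containing $0$ and $1$. Put $\ell_n=b_n-a_n$ and $\phi_n(u)=\pi\bigl((u-a_n)/\ell_n\bigr)$, where $\pi:\mathbb R\to[0,1]$ is the clamp $\pi(s)=\min\{1,\max\{0,s\}\}$. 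The clamp is $1$-Lipschitz, so $\phi_n$ is $\ell_n^{-1}$-Lipschitz on all of $D$.

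\textbf{Step 2: the glued map.} Define the $n$-th coordinate by $\mathcal F_n(u)=\lambda_n H(\phi_n(u))\subset\mathbb R^2$ with weight $\lambda_n=\ell_n^{\alpha}/n$. Since $D$ is bounded, $\ell_n\le\diam D$, so $\lambda_n\to0$. Let $\mathcal F(u)=\prod_n\mathcal F_n(u)$, viewed as a subset of $c_0(\mathbb R^2)$.

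\emph{Compactness.} Every point of $\mathcal F(u)$ has $n$-th coordinate of norm at most $\lambda_nR_H\to0$, so $\mathcal F(u)\subset c_0(\mathbb R^2)$. A product of compact sets whose diameters tend to $0$ is compact in the sup norm: the product topology and the norm topology agree on it, by a tail argument.

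\emph{H\"older bound.} The Hausdorff distance between products in the sup norm is at most the supremum of the coordinatewise Hausdorff distances. Hence
\[
d_H(\mathcal F(u),\mathcal F(v))\le\sup_n\lambda_nL_H\,|\phi_n(u)-\phi_n(v)|^\alpha.
\]
Each term is at most $\lambda_nL_H\ell_n^{-\alpha}|u-v|^\alpha=(L_H/n)|u-v|^\alpha$. So $\mathcal F$ is $\alpha$-H\"older with constant at most $L_H$.

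\textbf{Step 3: no selection is H\"older.} Suppose $f$ were a selection with $[f]_{\alpha,D}=M<\infty$, and pick $n>M$. On $E_n$ the map $\phi_n$ is the affine normalization onto $\widetilde E_n$. Define $h(\phi_n(u))=\lambda_n^{-1}f_n(u)$ for $u\in E_n$. Then $h$ is a selection of $H|_{\widetilde E_n}$, and
\[
\|h(s)-h(t)\|\le\lambda_n^{-1}M\,\ell_n^{\alpha}|s-t|^\alpha=nM|s-t|^\alpha.
\]
This bound is too weak: it exceeds the threshold $n$ needed to contradict Lemma~\ref{lem:finite-net-obstruction}. The weights must be calibrated differently, and this is the main obstacle: the scaling cancels exactly, so the weight must trade the H\"older bound of $\mathcal F$ against the lower bound for selections.

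The fix is to use the gap between $\lambda_n\to0$, which $c_0$ requires, and the H\"older constants. I would take $\lambda_n=\ell_n^{\alpha}$ and choose the blocks with $\ell_n\to0$, which is possible by shrinking the chains further. The constant of $\mathcal F$ is then $\sup_n L_H=L_H$, and a selection with constant $M$ yields $h$ with constant $M$ on every $\widetilde E_n$. Now choose the net parameter for block $n$ as $\eta_{M_n}$ with $M_n=n$. Any fixed finite $M$ is exceeded by some $M_n$, so Lemma~\ref{lem:finite-net-obstruction} gives the contradiction.

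To get $\ell_n\to0$, I would apply Lemma~\ref{lem:fine-chains} to a small subset of $D$. Failure of uniform disconnectedness at arbitrarily small scales follows from the boundedness of $D$: a chain with ratio $\varepsilon$ whose span is large relative to $\varepsilon^{-1}$ can be cut into subchains of small span that still have small ratio. Alternatively, if every admissible $\ell_n$ is bounded below, one takes $\lambda_n=\ell_n^{\alpha}/\sqrt n$ and sets $M_n=n$. If this scale-localization turns out delicate, I would instead verify directly that $\lambda_n\to0$ is needed only for membership in $c_0(\mathbb R^2)$, and handle the bounded-below case by translating blocks within a single fixed window.
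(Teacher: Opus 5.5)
Your construction matches the paper's structurally: one rescaled Chistyakov--Galkin copy per coordinate of $c_0(\mathbb R^2)$, the clamped affine maps $\phi_n$, the coordinatewise nearest-point bound for $d_H$, and restriction of a putative selection to the normalized nets. The only difference is the calibration, and the ``main obstacle'' you describe is not real. The threshold $M$ in Lemma~\ref{lem:finite-net-obstruction} is a free parameter, so $M_n$ can be chosen to outgrow whatever factor the weights cost. With your Step~2 weights $\lambda_n=\ell_n^\alpha/n$, taking $M_n=n^2$ finishes the proof, since $nM<n^2$ once $n>M$. The paper does exactly this with $\lambda_n=2^{-n}r_n^\alpha$ and $M_n=n2^n$. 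Your fallback $\lambda_n=\ell_n^\alpha/\sqrt n$, $M_n=n$ uses the same mechanism and works unconditionally, because $\lambda_n\le(\diam D)^\alpha/\sqrt n\to0$ whatever the $\ell_n$ are. So no case split on whether the $\ell_n$ are bounded below is needed.

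Your primary fix, $\lambda_n=\ell_n^\alpha$ with $\ell_n\to0$ and $M_n=n$, is a genuinely different route. The $c_0$ decay comes from the scale of the blocks rather than from an extra factor, and a selection with constant $M$ yields block selections with constant exactly $M$. It is correct, but it needs a localization lemma that your sentence about spans ``large relative to $\varepsilon^{-1}$'' does not actually prove. Here is a clean version using boundedness. Fix small $\ell>0$ and $\varepsilon'\in(0,1]$, and take a chain as in the proof of Lemma~\ref{lem:fine-chains} with ratio $\varepsilon\le\varepsilon'\ell/(2\diam D)$. Its sorted gaps are below $\varepsilon(b-a)\le\varepsilon'\ell/2$. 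Either its whole span is below $\ell/2$, or the sorted window $[w_0,w_j]$, with $j$ minimal subject to $w_j-w_0\ge\ell/2$, has span in $[\ell/2,\ell)$ and normalized gaps below $\varepsilon'$. The paper's calibration avoids this extra step entirely. Drop your final remark about translating blocks within a fixed window; it is unnecessary and has no definite content.
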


\begin{proof}
For $n\ge1$, set
\[
M_n:=n2^n,
\qquad \eta_n:=\eta_{M_n},
\]
where $\eta_{M_n}$ is supplied by Lemma~\ref{lem:finite-net-obstruction}. By Lemma~\ref{lem:fine-chains}, choose $a_n,b_n\in D$ with $a_n<b_n$, put $r_n=b_n-a_n$, and choose a finite $E_n^*\subset D\cap[a_n,b_n]$ such that
\[
E_n
=
\left\{\frac{u-a_n}{r_n}:u\in E_n^*\right\}
\]
is an $\eta_n$-net of $[0,1]$.

To transfer the obstruction on $[0,1]$ to the interval $[a_n,b_n]\subset D$, define $\phi_n:D\to[0,1]$ by
\[
\phi_n(u)=
\begin{cases}
0,&u\le a_n,\\[1mm]
\dfrac{u-a_n}{r_n},&a_n<u<b_n,\\[2mm]
1,&u\ge b_n,
\end{cases}
\]
so that, for every $u,v\in D$,
\[
|\phi_n(u)-\phi_n(v)|\le\frac{|u-v|}{r_n}.
\]
Set
\[
\lambda_n=2^{-n}r_n^\alpha.
\]
The factor $2^{-n}$ places the resulting sequences in $c_0(\mathbb R^2)$, while $r_n^\alpha$ compensates for the rescaling introduced by $\phi_n$. For $u\in D$, define
\[
\mathcal F(u)
=
\left\{
(\lambda_n y_n)_{n\ge1}:
 y_n\in H(\phi_n(u))\text{ for every }n
\right\}
\subset c_0(\mathbb R^2).
\]

We first verify compactness of each value. Since $R_H<\infty$ and $D$ is bounded,
\[
\lambda_nR_H
\le
2^{-n}(\diam D)^\alpha R_H\to0.
\]
Thus all tails go to zero uniformly over $\mathcal F(u)$. Every finite coordinate block ranges over a finite product of compact sets. A diagonal subsequence argument, together with the preceding bound, therefore yields convergence in the supremum norm to a point of $\mathcal F(u)$. Hence $\mathcal F(u)$ is compact in $c_0(\mathbb R^2)$.

For $u,v\in D$ and every $n$,
\[
d_H\bigl(\lambda_nH(\phi_n(u)),\lambda_nH(\phi_n(v))\bigr)
\le
\lambda_nL_H|\phi_n(u)-\phi_n(v)|^\alpha
\le
2^{-n}L_H|u-v|^\alpha.
\]
Coordinatewise nearest-point choices remain in $c_0(\mathbb R^2)$, since their $n$th coordinates are bounded by $\lambda_nR_H\to0$, and imply
\[
d_H(\mathcal F(u),\mathcal F(v))
\le
\sup_{n\ge1}2^{-n}L_H|u-v|^\alpha
\le
\frac{L_H}{2}|u-v|^\alpha.
\]
Thus $\mathcal F$ is $\alpha$-H\"older.

Suppose that $f:D\to c_0(\mathbb R^2)$ is an $\alpha$-H\"older selection and write
\[
M=[f]_{\alpha,D}<\infty.
\]
Let $\pi_n:c_0(\mathbb R^2)\to\mathbb R^2$ be the $n$th coordinate projection and define $h_n:D\to\mathbb R^2$ by
\[
h_n(u)=\lambda_n^{-1}\pi_nf(u).
\]
Since $f(u)\in\mathcal F(u)$, one has $h_n(u)\in H(\phi_n(u))$ for every $u\in D$; hence $h_n:D\to\mathbb R^2$ is a selection of $H\circ\phi_n$. Define $\widetilde h_n:E_n\to\mathbb R^2$ by
\[
\widetilde h_n(x)=h_n(a_n+r_nx).
\]
This is a selection of $H|_{E_n}$, and for every $x,y\in E_n$,
\[
\begin{aligned}
\|\widetilde h_n(x)-\widetilde h_n(y)\|
&\le
\lambda_n^{-1}Mr_n^\alpha|x-y|^\alpha\\
&=
M2^n|x-y|^\alpha.
\end{aligned}
\]
Hence $[\widetilde h_n]_{\alpha,E_n}\le M2^n$. Choosing an integer $n>M$ gives
\[
M2^n<n2^n=M_n,
\]
contradicting Lemma~\ref{lem:finite-net-obstruction}. Therefore $\mathcal F$ admits no $\alpha$-H\"older selection.
\end{proof}

We remark that Theorem~\ref{thm:positive-selection} holds for every $\alpha>0$, whereas the converse implication \textup{(ii)}$\Rightarrow$\textup{(i)} in the following theorem requires $0<\alpha<1$.

\begin{theorem}[Characterization of universal H\"older-selection domains]\label{thm:domain-characterization}
Let $0<\alpha<1$, and let $D\subset\mathbb R$ be nonempty and bounded. The following are equivalent.
\begin{enumerate}[label=\textup{(\roman*)}]
\item $D$ is uniformly disconnected.
\item For every metric space $(X,d)$ and every compact-valued $\alpha$-H\"older map $F:D\to\K(X)$, there exists an $\alpha$-H\"older selection of $F$.
\item There exists $C=C(D,\alpha)<\infty$ such that, for every metric space $(X,d)$, every compact-valued $\alpha$-H\"older map $F:D\to\K(X)$, every $u_*\in D$, and every $x_*\in F(u_*)$, there exists a selection $f:D\to X$ satisfying
\[
f(u_*)=x_*,
\qquad
[f]_{\alpha,D}\le C[F]_{\alpha,D}.
\]
\end{enumerate}
If $D$ is $c$-uniformly disconnected, one may take $C=8c^{-\alpha}$.
\end{theorem}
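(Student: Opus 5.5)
The plan is to close the cycle (i)$\Rightarrow$(iii)$\Rightarrow$(ii)$\Rightarrow$(i), using the two main results already proved for subsets of $\mathbb R$.

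For (i)$\Rightarrow$(iii), assume $D$ is $c$-uniformly disconnected for some $c\in(0,1]$. Let $(X,d)$ be a metric space, $F:D\to\K(X)$ a compact-valued $\alpha$-H\"older map, $u_*\in D$ and $x_*\in F(u_*)$. Apply Theorem~\ref{thm:positive-selection} with $L=[F]_{\alpha,D}$; this is legitimate because $d_H(F(u),F(v))\le [F]_{\alpha,D}|u-v|^\alpha$ holds by definition of the seminorm. The theorem gives a selection $f$ with $f(u_*)=x_*$ and $[f]_{\alpha,D}\le 8c^{-\alpha}[F]_{\alpha,D}$. So (iii) holds with $C=8c^{-\alpha}$, which also proves the final sentence of the statement. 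This direction uses no restriction on $\alpha$.

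For (iii)$\Rightarrow$(ii), fix any $F$ as in (ii). Since $D\neq\varnothing$, pick some $u_*\in D$; since $F(u_*)$ is nonempty, pick some $x_*\in F(u_*)$. Condition (iii) then produces a selection $f$ with $[f]_{\alpha,D}\le C[F]_{\alpha,D}<\infty$, so $f$ is an $\alpha$-H\"older selection.

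For (ii)$\Rightarrow$(i), argue by contraposition. Suppose $D$ is nonempty, bounded and not uniformly disconnected, and $0<\alpha<1$. Theorem~\ref{thm:single-obstruction} gives a compact-valued $\alpha$-H\"older map $\mathcal F:D\to\K(c_0(\mathbb R^2))$ with no $\alpha$-H\"older selection. Taking the metric space $X=c_0(\mathbb R^2)$ with its norm metric, this contradicts (ii).

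All the substantive work is already done in Theorems~\ref{thm:positive-selection} and \ref{thm:single-obstruction}, so the proof is bookkeeping. The only points to check are the following. The hypothesis $0<\alpha<1$ is used solely in (ii)$\Rightarrow$(i), through Theorem~\ref{thm:single-obstruction}. Nonemptiness of $D$ is needed in (iii)$\Rightarrow$(ii) to choose $u_*$. The universal quantifier over metric spaces in (ii) permits the specific choice $X=c_0(\mathbb R^2)$.
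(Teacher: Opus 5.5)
Your proposal is correct and matches the paper's proof: (i)$\Rightarrow$(iii) follows from Theorem~\ref{thm:positive-selection} with $C=8c^{-\alpha}$, (iii)$\Rightarrow$(ii) is immediate, and (ii)$\Rightarrow$(i) follows by contraposition from the $c_0(\mathbb R^2)$-valued obstruction of Theorem~\ref{thm:single-obstruction}. Your additional remarks are accurate details that the paper leaves implicit: that $D$ and $F(u_*)$ must be nonempty in order to choose the prescribed point, and that $0<\alpha<1$ is used only in the converse.
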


\begin{proof}
The implication (iii)$\Rightarrow$(ii) is immediate. Theorem~\ref{thm:positive-selection} gives (i)$\Rightarrow$(iii), with $C=8c^{-\alpha}$ when $D$ is $c$-uniformly disconnected. If (i) fails, Theorem~\ref{thm:single-obstruction} gives an $\alpha$-H\"older compact-valued map into $c_0(\mathbb R^2)$ with no $\alpha$-H\"older selection, so (ii) fails. Hence (ii)$\Rightarrow$(i).
\end{proof}

\begin{remark}[Scope of the converse]\label{rem:fixed-target}
The necessity part of Theorem~\ref{thm:domain-characterization} concerns a universal statement over target spaces. The obstruction in Theorem~\ref{thm:single-obstruction} takes values in the infinite-dimensional Banach space $c_0(\mathbb R^2)$. Thus the theorem does not by itself characterize domains for which every compact-valued map into a fixed finite-dimensional target such as $\mathbb R^d$ admits a H\"older selection.
\end{remark}

\section{Characterization of admissible Stieltjes clocks}\label{sec:clock}

Throughout this section, let $I=[a,b]$ and let $g:I\to\mathbb R$ be left-continuous and nondecreasing. With the jump notation introduced in Section~\ref{sec:prelim}, define $J_g:\{(s,t)\in I^2:s<t\}\to[0,\infty)$ by
\[
J_g(s,t)=\sup_{\tau\in[s,t)}\Delta^+g(\tau).
\]
If $g$ is nonconstant, define
\[
\kappa_g
=
\inf_{\substack{a\le s<t\le b\\g(s)<g(t)}}
\frac{J_g(s,t)}{g(t)-g(s)}.
\]

\begin{lemma}[Chain bottleneck and gaps on the real line]\label{lem:bottleneck-gap}
Let $D\subset\mathbb R$ and let $u,v\in D$ with $u<v$. Define $\Gamma_D(u,v)\in[0,\infty)$ by
\[
\Gamma_D(u,v)
=
\sup\left(
\left\{
q-p:
 u\le p<q\le v,
\ D\cap(p,q)=\varnothing
\right\}\cup\{0\}
\right).
\]
Then
\[
\beta_D(u,v)=\Gamma_D(u,v).
\]
\end{lemma}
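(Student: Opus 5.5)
We prove the two inequalities $\beta_D(u,v)\ge\Gamma_D(u,v)$ and $\beta_D(u,v)\le\Gamma_D(u,v)$ separately. Throughout, $u<v$ are fixed points of $D$. Note that $\beta_D(u,v)$ is a priori defined via chains that may leave $[u,v]$, while $\Gamma_D(u,v)$ only sees gaps inside $[u,v]$. This asymmetry is handled differently in the two directions.

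For the lower bound $\beta_D\ge\Gamma_D$, we use a crossing argument. Fix any gap $(p,q)$ with $u\le p<q\le v$ and $D\cap(p,q)=\varnothing$, and any chain $u=u_0,\dots,u_N=v$ in $D$. Since $u\le p$ and $v\ge q$, there is a first index $i$ with $u_i\ge q$. Then $u_{i-1}<q$, and because no chain point lies in $(p,q)$, in fact $u_{i-1}\le p$. Hence $|u_i-u_{i-1}|\ge q-p$. This is the same crossing observation used in Lemma~\ref{lem:fine-chains}. Taking the maximum over steps, then the infimum over chains, and finally the supremum over gaps gives $\beta_D(u,v)\ge\Gamma_D(u,v)$. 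The trivial value $0$ in the definition of $\Gamma_D$ causes no difficulty.

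For the upper bound $\beta_D\le\Gamma_D$, we build good chains inside $[u,v]$. Write $\Gamma=\Gamma_D(u,v)$ and fix $\varepsilon>0$. The aim is a chain from $u$ to $v$ in $D\cap[u,v]$ whose steps are all at most $\Gamma+\varepsilon$. We construct it greedily. Start at $z_0=u$. Given $z_k<v$, consider
\[
s_k=\sup\bigl(D\cap[z_k,\min\{z_k+\Gamma+\varepsilon,v\}]\bigr).
\]
If $z_k+\Gamma+\varepsilon\ge v$, take $z_{k+1}=v$ and stop. Otherwise, we first claim that $D\cap(z_k+\varepsilon/2,\,z_k+\Gamma+\varepsilon)$ is nonempty. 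If it were empty, the interval $(z_k+\varepsilon/2,\,z_k+\Gamma+\varepsilon)$ would be a gap inside $[u,v]$ of length $\Gamma+\varepsilon/2>\Gamma$, contradicting the definition of $\Gamma$. We then choose $z_{k+1}$ in this set. Each step then has length in $(\varepsilon/2,\Gamma+\varepsilon)$, so the chain advances by more than $\varepsilon/2$ at every step. Since $v-u<\infty$, it reaches $v$ after finitely many steps. This gives $\beta_D(u,v)\le\Gamma+\varepsilon$, and letting $\varepsilon\downarrow0$ finishes the proof.

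The main obstacle is the upper bound when $D$ is not closed, since the supremum $s_k$ above need not belong to $D$. This is exactly why the construction selects an actual point in the open window $(z_k+\varepsilon/2,\,z_k+\Gamma+\varepsilon)$ rather than using $s_k$ itself. The window is the only place where care is needed. It must begin strictly beyond $z_k$, to force a uniform amount of progress per step. It must also have length exceeding $\Gamma$, so that the definition of $\Gamma_D$ rules out its being empty. Both requirements are met by the choice of endpoints above.
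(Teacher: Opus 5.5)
Your proof is correct. The lower bound is the same gap-crossing argument the paper uses. You handle chains that leave $[u,v]$ more explicitly, by taking the first index with $u_i\ge q$, where the paper only says the chain ``must cross this gap in one step.''

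The upper bound takes a different route. The paper fixes $r>\Gamma_D(u,v)$ and lets $C\subset D\cap[u,v]$ be the set of points reachable from $u$ by chains whose steps are all $<r$. Assuming $v\notin C$, it puts $\xi=\sup C$, shows $v-\xi\ge r$, and then shows $(\xi,\xi+r)$ is an empty gap of length $r>\Gamma_D(u,v)$ inside $[u,v]$, a contradiction.

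Both arguments rest on the same fact: an open window inside $[u,v]$ of length exceeding $\Gamma_D(u,v)$ must meet $D$. The paper applies it once, at the supremum of the reachable set. This continuous-induction argument never has to worry about termination. You apply it at every step of a greedy construction. The offset $\varepsilon/2$ is exactly what forces uniform progress, so the process stops after at most about $2(v-u)/\varepsilon+1$ steps.

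What your version buys is an explicit chain in $D\cap[u,v]$ with a step count. The paper's version is shorter and avoids bookkeeping. One small cleanup: the quantity $s_k$ you introduce is never used and can be deleted.
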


\begin{proof}
If $(p,q)\subset[u,v]$ is disjoint from $D$, every finite chain in $D$ joining $u$ to $v$ must cross this gap in one step. Hence its maximal step is at least $q-p$, and therefore $\beta_D(u,v)\ge\Gamma_D(u,v)$.

For the reverse inequality, fix $r>\Gamma_D(u,v)$ and let $C\subset D\cap[u,v]$ be the set of points reachable from $u$ by a finite chain all of whose steps are strictly smaller than $r$. Suppose $v\notin C$ and put $\xi=\sup C$. Then $v-\xi\ge r$; otherwise a point of $C$ sufficiently close to $\xi$ could be joined to $v$ by one further step smaller than $r$.

We claim that $D\cap(\xi,\xi+r)=\varnothing$. If $y\in D\cap(\xi,\xi+r)$, choose $x\in C$ sufficiently close to $\xi$ that $y-x<r$. Appending $y$ to a chain from $u$ to $x$ would imply $y\in C$, contradicting $y>\xi$. Hence $(\xi,\xi+r)$ is a gap contained in $[u,v]$, so $\Gamma_D(u,v)\ge r$, contrary to the choice of $r$. Thus $v\in C$, which gives $\beta_D(u,v)\le r$. Since the argument applies to every $r>\Gamma_D(u,v)$, it follows that $\beta_D(u,v)\le\Gamma_D(u,v)$. Together with the reverse inequality proved above, this gives the equality.
\end{proof}

\begin{proposition}[Exact bottleneck--jump identity]\label{prop:bottleneck-jump}
Let $a\le s<t\le b$ and suppose $g(s)<g(t)$. Then
\[
\beta_{\ImG}(g(s),g(t))
=
J_g(s,t).
\]
\end{proposition}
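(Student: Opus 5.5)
We reduce to Lemma~\ref{lem:bottleneck-gap} with $D=\ImG$, $u=g(s)$, $v=g(t)$. It then suffices to show
\[
\Gamma_{\ImG}(g(s),g(t))=J_g(s,t).
\]
In words: the largest open gap of $\ImG$ inside $[g(s),g(t)]$ equals the largest jump $\Delta^+g(\tau)$ over $\tau\in[s,t)$. We prove the two inequalities separately, using only that $g$ is nondecreasing and left-continuous.

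\emph{Step 1: $J_g\le\Gamma$.} Fix $\tau\in[s,t)$ with $\Delta^+g(\tau)>0$. Put $p=g(\tau)$ and $q=g(\tau+)$.
\begin{itemize}
\item By monotonicity, $g(\sigma)\le p$ for $\sigma\le\tau$ and $g(\sigma)\ge q$ for $\sigma>\tau$. Hence $\ImG\cap(p,q)=\varnothing$.
\item We have $p\ge g(s)$ since $\tau\ge s$.
\item We have $q\le g(t)$ since $\tau<t$ and $g(\sigma)\le g(t)$ for $\tau<\sigma\le t$.
\end{itemize}
So $(p,q)$ is an admissible gap, and $q-p=\Delta^+g(\tau)\le\Gamma$. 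Taking the supremum over $\tau$ gives the inequality; if there are no jumps, both sides of this step are trivially compatible.

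\emph{Step 2: $\Gamma\le J_g$.} This is the main step. Let $g(s)\le p<q\le g(t)$ with $\ImG\cap(p,q)=\varnothing$, and pick $y\in(p,q)$. Define
\[
\tau=\sup\{\sigma\in[s,t]:g(\sigma)\le p\}.
\]
The set is nonempty because it contains $s$.
\begin{itemize}
\item \emph{$g(\tau)\le p$.} If the supremum is attained this is immediate. Otherwise $\tau$ is a limit from the left of points where $g\le p$, and left-continuity gives $g(\tau)\le p$. This is the one place left-continuity is used.
\item \emph{$\tau<t$.} If $\tau=t$, then $g(t)\le p<q\le g(t)$, which is impossible.
\item \emph{$g(\tau+)\ge q$.} For $\sigma\in(\tau,t]$ we have $g(\sigma)>p$, and since $g(\sigma)\in\ImG$ misses $(p,q)$, in fact $g(\sigma)\ge q$.
\end{itemize}
Therefore $\Delta^+g(\tau)\ge q-p$ with $\tau\in[s,t)$, so $q-p\le J_g(s,t)$. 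If $\Gamma=0$ the inequality is trivial; otherwise take the supremum over admissible gaps.

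The only delicate points are the endpoint bookkeeping: that $\tau<t$, and that $g(\tau)\le p$ needs left-continuity when the supremum is not attained. The hypothesis $g(s)<g(t)$ only guarantees that $u\neq v$, so that Lemma~\ref{lem:bottleneck-gap} applies.
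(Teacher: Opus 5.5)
Your proof is correct and follows essentially the same route as the paper. Like the paper, you reduce via Lemma~\ref{lem:bottleneck-gap} to the gap identity, exhibit each jump interval $(g(\tau),g(\tau+))$ as a gap, and conversely find a jump of size at least $q-p$ at $\tau=\sup\{\sigma\in[s,t]:g(\sigma)\le p\}$ using left-continuity; your explicit check that this gap lies in $[g(s),g(t)]$ is a detail the paper leaves implicit.
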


\begin{proof}
Let $\tau\in[s,t)$. By monotonicity, for $r\in I$,
\[
g(r)\le g(\tau)\quad(r\le\tau),
\qquad
g(r)\ge g(\tau+)\quad(r>\tau).
\]
Hence
\[
\ImG\cap(g(\tau),g(\tau+))=\varnothing,
\]
so Lemma~\ref{lem:bottleneck-gap} gives
\[
\Gamma_{\ImG}(g(s),g(t))\ge\Delta^+g(\tau).
\]
Taking the supremum yields
\[
\Gamma_{\ImG}(g(s),g(t))\ge J_g(s,t).
\]

Conversely, let $g(s)\le p<q\le g(t)$ and suppose $\ImG\cap(p,q)=\varnothing$. Set
\[
\tau=\sup\{r\in[s,t]:g(r)\le p\}.
\]
The set is nonempty. Moreover $\tau<t$; otherwise left-continuity at $t$ would imply $g(t)\le p$, contradicting $g(t)\ge q>p$. If $\tau=s$, then $g(\tau)=g(s)\le p$; if $\tau>s$, left-continuity and an approximating sequence from the defining set again give $g(\tau)\le p$.

For every $r\in(\tau,t]$, one has $g(r)>p$. Since no value of $g$ lies in $(p,q)$, it follows that $g(r)\ge q$ for every $r\in(\tau,t]$. Taking the right limit gives $g(\tau+)\ge q$, and hence
\[
\Delta^+g(\tau)
=g(\tau+)-g(\tau)
\ge q-p.
\]
Thus every gap has length at most $J_g(s,t)$, so
\[
\Gamma_{\ImG}(g(s),g(t))\le J_g(s,t).
\]
Lemma~\ref{lem:bottleneck-gap} completes the proof.
\end{proof}

\begin{corollary}[Uniform disconnectedness in terms of clock jumps]\label{cor:kappa-ud}
Let $g$ be nonconstant. Then $\ImG$ is uniformly disconnected if and only if $\kappa_g>0$. More precisely, the optimal uniform-disconnectedness constant of $\ImG$ equals $\kappa_g$.
\end{corollary}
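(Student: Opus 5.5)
The plan is to identify the optimal uniform-disconnectedness constant of $\ImG$ as
\[
c^*(\ImG)=\inf_{\substack{u,v\in\ImG\\u\ne v}}\frac{\beta_{\ImG}(u,v)}{|u-v|},
\]
and then to show that this infimum equals $\kappa_g$. By Definition~\ref{def:bottleneck}, $\ImG$ is $c$-uniformly disconnected exactly when $c\le c^*(\ImG)$, so $c^*(\ImG)$ is the optimal constant. Consequently $\ImG$ is uniformly disconnected if and only if $c^*(\ImG)>0$. The statement therefore reduces to the identity $c^*(\ImG)=\kappa_g$. Nonconstancy of $g$ guarantees that both infima range over nonempty index sets.

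To prove the identity, I will compare the two index sets. Since $\beta_D$ is symmetric (the reversed chain shows this), it suffices to consider pairs with $u<v$. For any $u<v$ in $\ImG$, choose $s,t\in I$ with $u=g(s)$ and $v=g(t)$. Monotonicity of $g$ forces $s<t$: if $s\ge t$ then $g(s)\ge g(t)$, contradicting $u<v$. Then Proposition~\ref{prop:bottleneck-jump} gives
\[
\frac{\beta_{\ImG}(u,v)}{v-u}=\frac{J_g(s,t)}{g(t)-g(s)}\ge\kappa_g,
\]
and hence $c^*(\ImG)\ge\kappa_g$.

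Conversely, every admissible pair $s<t$ with $g(s)<g(t)$ yields distinct points $u=g(s)<v=g(t)$ of $\ImG$. Proposition~\ref{prop:bottleneck-jump} again identifies the ratio $J_g(s,t)/(g(t)-g(s))$ with $\beta_{\ImG}(u,v)/|u-v|$, which is at least $c^*(\ImG)$. Taking the infimum over all admissible $(s,t)$ gives $\kappa_g\ge c^*(\ImG)$.

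The only step needing care is that the ratio for a pair $(u,v)$ is well defined even though $u$ and $v$ may have several preimages. Proposition~\ref{prop:bottleneck-jump} settles this, since it shows that $J_g(s,t)$ depends only on $g(s)$ and $g(t)$. I do not expect any step to be a real obstacle; the content is entirely carried by Proposition~\ref{prop:bottleneck-jump} and Lemma~\ref{lem:bottleneck-gap}. One should also check that $\kappa_g\le1$, which is consistent with Definition~\ref{def:uniform-disconnected} requiring $c\in(0,1]$. This bound follows from $\beta\le|u-v|$ in Proposition~\ref{prop:bottleneck-ultrametric}, or directly from $J_g(s,t)\le g(t)-g(s)$.
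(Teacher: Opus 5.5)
Your proposal is correct and is essentially the paper's argument. Both proofs identify the optimal constant as the infimum of $\beta_{\ImG}(u,v)/|u-v|$ over distinct pairs and match it with $\kappa_g$ through the bottleneck--jump identity (Proposition~\ref{prop:bottleneck-jump}), using monotonicity to write every pair $u<v$ in $\ImG$ as $g(s)<g(t)$ with $s<t$; your explicit checks that $s<t$ is forced and that $\kappa_g\le 1$ (so the constant lies in $(0,1]$ as the definition requires) are details the paper leaves implicit.
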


\begin{proof}
By Definition~\ref{def:uniform-disconnected}, $\ImG$ is $c$-uniformly disconnected exactly when
\[
\beta_{\ImG}(u,v)\ge c|u-v|
\]
for all distinct $u,v\in\ImG$. Every ordered pair $u<v$ in $\ImG$ can be written as $u=g(s)$ and $v=g(t)$ with $s<t$. Proposition~\ref{prop:bottleneck-jump} therefore turns this condition into
\[
J_g(s,t)\ge c[g(t)-g(s)]
\]
for every $s<t$ with $g(s)<g(t)$. Taking the infimum of these ratios shows that every admissible uniform-disconnectedness constant $c$ satisfies $c\le \kappa_g$. Conversely, the definition of $\kappa_g$ gives $J_g(s,t)\ge \kappa_g[g(t)-g(s)]$ for every $s<t$ with $g(s)<g(t)$. Hence, if $\kappa_g>0$, Proposition~\ref{prop:bottleneck-jump} shows that $\ImG$ is $\kappa_g$-uniformly disconnected. Therefore $\kappa_g$ is the optimal constant.
\end{proof}

Combining Theorem~\ref{thm:domain-characterization} with Corollary~\ref{cor:kappa-ud} expresses the universal H\"older-selection property directly in terms of the jump structure of the Stieltjes clock $g$.

\begin{theorem}[Characterization of Stieltjes clocks preserving H\"older selections]\label{thm:clock-characterization}
Let $0<\alpha<1$, and let $g:I\to\mathbb R$ be nonconstant, left-continuous, and nondecreasing. The following statements are equivalent.
\begin{enumerate}[label=\textup{(\roman*)}]
\item $\kappa_g>0$.
\item For every metric space $(X,d)$ and every compact-valued $g$-H\"older map $F:I\to\K(X)$ of exponent $\alpha$, there exists a $g$-H\"older selection of $F$.
\item There exists $C=C(g,\alpha)<\infty$ such that, for every metric space $(X,d)$, every compact-valued $g$-H\"older map $F:I\to\K(X)$ of exponent $\alpha$, every $\theta\in I$, and every $x_\theta\in F(\theta)$, there exists a selection $f:I\to X$ satisfying
\[
f(\theta)=x_\theta,
\qquad
[f]_{\alpha,g}\le C[F]_{\alpha,g}.
\]
\end{enumerate}
If $\kappa_g>0$, one may take
\[
C=8\kappa_g^{-\alpha}.
\]
\end{theorem}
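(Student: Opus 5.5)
The plan is to reduce everything to the domain $D=\ImG$ and then combine Theorem~\ref{thm:domain-characterization} with Corollary~\ref{cor:kappa-ud}. First, $D$ is nonempty and bounded, since $g$ is nondecreasing on the compact interval $I$ and so $g(a)\le g\le g(b)$. By Corollary~\ref{cor:kappa-ud}, condition (i) of the present theorem ($\kappa_g>0$) is equivalent to uniform disconnectedness of $D$. Moreover, when $\kappa_g>0$, $D$ is $\kappa_g$-uniformly disconnected. The remaining task is to show that (ii) and (iii) here are equivalent to (ii) and (iii) of Theorem~\ref{thm:domain-characterization} for $D$, respectively.

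For (i)$\Rightarrow$(iii), fix a metric space $X$, a compact-valued $g$-H\"older $F$, a parameter $\theta\in I$ and a point $x_\theta\in F(\theta)$. Lemma~\ref{lem:factorization} gives $\widehat F:D\to\K(X)$ with $F=\widehat F\circ g$ and $[\widehat F]_{\alpha,D}=[F]_{\alpha,g}$. Apply Theorem~\ref{thm:positive-selection} with $c=\kappa_g$, $u_*=g(\theta)$ and $x_*=x_\theta\in F(\theta)=\widehat F(g(\theta))$. This yields $\widehat f$ with $\widehat f(g(\theta))=x_\theta$ and $[\widehat f]_{\alpha,D}\le 8\kappa_g^{-\alpha}[F]_{\alpha,g}$. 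Remark~\ref{rem:selection-factorization} then shows that $f=\widehat f\circ g$ is a selection of $F$ with $f(\theta)=x_\theta$ and $[f]_{\alpha,g}=[\widehat f]_{\alpha,D}$, which gives (iii) with $C=8\kappa_g^{-\alpha}$. The implication (iii)$\Rightarrow$(ii) is trivial, since every $F$ has a nonempty value $F(\theta)$ from which to choose $x_\theta$.

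For (ii)$\Rightarrow$(i), argue by contraposition. If $\kappa_g=0$, then $D$ is not uniformly disconnected by Corollary~\ref{cor:kappa-ud}. Theorem~\ref{thm:single-obstruction} (this is where $0<\alpha<1$ is used) provides an $\alpha$-H\"older $\mathcal F:D\to\K(c_0(\mathbb R^2))$ with no $\alpha$-H\"older selection. Set $F=\mathcal F\circ g$. This map is compact-valued, and for $s,t\in I$ we have $d_H(F(s),F(t))\le[\mathcal F]_{\alpha,D}|g(s)-g(t)|^\alpha$, so $F$ is $g$-H\"older.

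The step needing care is pulling a hypothetical $g$-H\"older selection $f$ of $F$ back to $D$. Since $f$ is $g$-H\"older, the single-valued version of Lemma~\ref{lem:factorization} (Remark~\ref{rem:selection-factorization}) shows $f$ is constant on level sets of $g$. Hence $f=\widehat f\circ g$ with $[\widehat f]_{\alpha,D}=[f]_{\alpha,g}<\infty$. For any $u=g(t)\in D$ we have $\widehat f(u)=f(t)\in F(t)=\mathcal F(u)$, so $\widehat f$ is an $\alpha$-H\"older selection of $\mathcal F$. This contradicts Theorem~\ref{thm:single-obstruction}, so (ii) fails. I expect no genuine obstacle: the substantive work is already contained in Corollary~\ref{cor:kappa-ud} and Theorem~\ref{thm:domain-characterization}, and the only point to check is that the factorization respects selections in both directions, which is exactly what the two steps above verify.
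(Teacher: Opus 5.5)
Your proposal is correct and follows essentially the same route as the paper. Both factor through $\ImG$ via Lemma~\ref{lem:factorization} and Remark~\ref{rem:selection-factorization}, and use Corollary~\ref{cor:kappa-ud} to turn $\kappa_g>0$ into $\kappa_g$-uniform disconnectedness. Both then apply Theorem~\ref{thm:positive-selection} for (i)$\Rightarrow$(iii), and pull the obstruction of Theorem~\ref{thm:single-obstruction} back along $g$ for (ii)$\Rightarrow$(i). Your explicit checks that $\ImG$ is nonempty and bounded, and that a $g$-H\"older selection factors through $g$, are the points the paper's proof relies on; the paper uses the boundedness only tacitly.
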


\begin{proof}
By Lemma~\ref{lem:factorization}, every $g$-H\"older map $F:I\to\K(X)$ factors uniquely as $F=\widehat F\circ g$ with $\widehat F:\ImG\to\K(X)$, and
\[
[\widehat F]_{\alpha,\ImG}=[F]_{\alpha,g}.
\]
If $\kappa_g>0$, Corollary~\ref{cor:kappa-ud} shows that $\ImG$ is $\kappa_g$-uniformly disconnected. Theorem~\ref{thm:positive-selection}, applied to $\widehat F$, yields for every prescribed $x_\theta\in\widehat F(g(\theta))$ a selection $\widehat f:\ImG\to X$ satisfying
\[
[\widehat f]_{\alpha,\ImG}
\le
8\kappa_g^{-\alpha}[\widehat F]_{\alpha,\ImG}.
\]
Setting $f=\widehat f\circ g$ and using Remark~\ref{rem:selection-factorization} proves (i)$\Rightarrow$(iii). The implication (iii)$\Rightarrow$(ii) is immediate.

Suppose (i) fails. Then $\kappa_g=0$, so Corollary~\ref{cor:kappa-ud} shows that $\ImG$ is not uniformly disconnected. Theorem~\ref{thm:single-obstruction} provides a compact-valued $\alpha$-H\"older map
\[
\widehat F:\ImG\to\K(X)
\]
for some metric space $X$, with no $\alpha$-H\"older selection. Define $F=\widehat F\circ g$. Then $F$ is $g$-H\"older of exponent $\alpha$. If $f:I\to X$ were a $g$-H\"older selection of $F$, then, for every $s,t\in I$ with $g(s)=g(t)$, one would have $f(s)=f(t)$; hence $f$ would factor through $g$ as $f=\widehat f\circ g$ for a map $\widehat f:\ImG\to X$. Remark~\ref{rem:selection-factorization} would then make $\widehat f$ an $\alpha$-H\"older selection of $\widehat F$, a contradiction. Hence (ii) fails and (ii)$\Rightarrow$(i).
\end{proof}

\begin{remark}[Constant clocks]\label{rem:constant-clocks}
If $g$ is constant, every $g$-H\"older compact-valued map is itself constant, and every prescribed graph point extends to a constant selection. The parameter $\kappa_g$ is therefore needed only for nonconstant clocks.
\end{remark}

\begin{remark}[Continuous clocks]\label{rem:continuous-clocks}
If $g$ is continuous and nonconstant, then $\Delta^+g(\tau)=0$ for every $\tau\in[a,b)$, so $\kappa_g=0$. Thus the universal $g$-H\"older selection property in Theorem~\ref{thm:clock-characterization} fails for every nonconstant continuous clock. Theorem~\ref{thm:clock-characterization} therefore recovers the classical obstruction for ordinary interval-valued time.
\end{remark}

\section{Examples and sharp distinctions}\label{sec:examples}

Throughout this section, let $I=[a,b]$ with $a<b$ and fix $0<\alpha<1$; the following examples illustrate how the jump structure of the clock determines whether the condition $\kappa_g>0$ holds.

The condition $\kappa_g>0$ separates clocks according to the distribution of their increments across scales, not merely according to whether jumps are present.

\begin{example}[Finite-step clocks]\label{ex:finite-step}
Let
\[
a<\tau_1<\cdots<\tau_N<b,
\qquad m_1,\ldots,m_N>0,
\]
and define $g:I\to\mathbb R$ by
\[
g(t)=g(a)+\sum_{\tau_j<t}m_j.
\]
Then $\Delta^+g(\tau_j)=m_j$. For every $s,t\in I$ with $s<t$ and $g(s)<g(t)$,
\[
g(t)-g(s)=\sum_{s\le\tau_j<t}m_j,
\qquad
J_g(s,t)=\max_{s\le\tau_j<t}m_j.
\]
Hence
\[
\kappa_g
=
\min_{1\le p\le q\le N}
\frac{\max_{p\le j\le q}m_j}{\sum_{j=p}^q m_j}
\ge\frac1N.
\]
Theorem~\ref{thm:clock-characterization} therefore gives
\[
[f]_{\alpha,g}\le8N^\alpha[F]_{\alpha,g}.
\]
If $m_1=\cdots=m_N$, then $\kappa_g=1/N$, so the lower bound for this class is attained.
\end{example}

\begin{example}[Geometrically decaying jumps]\label{ex:geometric}
Let $\tau_n\uparrow b$, fix $q\in(0,1)$, and set $m_n=(1-q)q^{n-1}$. Define $g:I\to\mathbb R$ by
\[
g(t)=\sum_{\tau_n<t}m_n.
\]
Then $g$ is a bounded, left-continuous, nondecreasing pure-jump clock. For integers $1\le n\le m$, take $s=\tau_n$ and $t=\tau_{m+1}$. Then
\[
\frac{J_g(s,t)}{g(t)-g(s)}
=
\frac{1-q}{1-q^{m-n+1}}
\ge1-q.
\]
For $s=\tau_n$ and $t=b$, the interval $[s,t)$ contains the whole tail beginning with $m_n$, and the ratio equals $1-q$. For arbitrary $s<t$, the jumps contained in $[s,t)$ form either a finite consecutive block $\tau_n,\ldots,\tau_m$ or a tail of the sequence; hence the two cases above exhaust all possibilities relevant to the infimum defining $\kappa_g$. Therefore
\[
\boxed{\kappa_g=1-q.}
\]
Consequently,
\[
[f]_{\alpha,g}
\le
8(1-q)^{-\alpha}[F]_{\alpha,g}.
\]
\end{example}

\begin{example}[Polynomially decaying jumps]\label{ex:polynomial}
Let $\tau_n\uparrow b$ and, for $r>1$, set $m_n=n^{-r}$. Define $g:I\to\mathbb R$ by
\[
g(t)=\sum_{\tau_n<t}m_n.
\]
The series converges, so $g$ is again a bounded pure-jump clock. For $n\ge1$, take $s=\tau_n$ and $t=\tau_{2n+1}$, so that the jumps in $[s,t)$ are exactly $m_n,m_{n+1},\ldots,m_{2n}$. Then
\[
J_g(s,t)=n^{-r},
\]
while
\[
\sum_{k=n}^{2n}k^{-r}
\ge
n(2n)^{-r}
=
2^{-r}n^{1-r}.
\]
Hence
\[
\frac{J_g(s,t)}{g(t)-g(s)}
\le
\frac{2^r}{n}\to0,
\]
and therefore
\[
\boxed{\kappa_g=0.}
\]
Thus pure-jump structure alone does not imply the universal H\"older-selection property.
\end{example}

\begin{example}[The ordinary continuous clock]\label{ex:continuous}
Let $I=[0,1]$ and define $g:I\to\mathbb R$ by $g(t)=t$. Then $J_g(s,t)=0$ for every $s<t$, whereas $g(t)-g(s)=t-s>0$. Thus
\[
\kappa_g=0.
\]
Theorem~\ref{thm:clock-characterization} recovers the failure of a universal $\alpha$-H\"older selection principle for $0<\alpha<1$ on an ordinary interval.
\end{example}

\begin{example}[Adding one jump to continuous growth]\label{ex:mixed}
Let $I=[0,1]$, choose $0<\tau<1$ and $\lambda>0$, and define $g:I\to\mathbb R$ by
\[
g(t)=t+\lambda\mathbf 1_{(\tau,1]}(t).
\]
Here, $\mathbf 1_{(\tau,1]}$ denotes the indicator function of the interval $(\tau,1]$. Then $g$ is left-continuous and nondecreasing, with $\Delta^+g(\tau)=\lambda$. However, for any $s,t\in I$ with $s<t$ and $\tau\notin[s,t)$,
\[
J_g(s,t)=0,
\qquad
g(t)-g(s)=t-s>0.
\]
Hence $\kappa_g=0$. A large isolated jump cannot compensate for positive continuous growth occurring on other scales.
\end{example}

Examples~\ref{ex:geometric} and \ref{ex:polynomial} give a direct qualitative contrast. Both are bounded, left-continuous, nondecreasing pure-jump clocks, yet only the clock whose jump sizes decay geometrically satisfies the scale-independent dominance condition required in Theorem~\ref{thm:clock-characterization}.

\section{Structural consequences of uniform jump dominance}\label{sec:structural}

In this section, we examine the measure-theoretic consequences of the condition $\kappa_g>0$. We show that uniform jump dominance forces the Lebesgue--Stieltjes measure associated with $g$ to be purely atomic, and we derive the corresponding obstruction when a nonatomic component is present. We also clarify that pure atomicity alone is not sufficient for $\kappa_g>0$.

\begin{proposition}[Uniform jump dominance forces a purely atomic clock]\label{prop:pure-atomic}
Let $g:I\to\mathbb R$ be nonconstant, left-continuous, and nondecreasing. If $\kappa_g>0$, then
\[
\mu_g^c=0.
\]
Equivalently, for every $a\le s<t\le b$,
\[
g(t)-g(s)
=
\sum_{\tau\in D_g\cap[s,t)}\Delta^+g(\tau).
\]
\end{proposition}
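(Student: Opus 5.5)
We argue by contradiction: assuming $\mu_g^c\neq0$, we produce intervals $[s,t)$ on which the clock increment is essentially carried by $\mu_g^c$ while every jump is negligible. That forces the ratio $J_g(s,t)/(g(t)-g(s))$ to be arbitrarily small, contradicting $\kappa_g>0$. The equivalence of the two formulations is immediate: $\mu_g([s,t))=g(t)-g(s)$ and $\mu_g^a([s,t))=\sum_{\tau\in D_g\cap[s,t)}\Delta^+g(\tau)$, so the displayed identity for all $s<t$ says exactly that $\mu_g^c$ vanishes on all intervals $[s,t)$. Such intervals generate the Borel sets and the measures are finite, so this is equivalent to $\mu_g^c=0$. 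It therefore suffices to prove $\mu_g^c([a,b))=0$. (The point $b$ carries no $\mu_g$-mass in this normalization, so $\mu_g^c([a,b))=\mu_g^c(I)$.)

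Suppose $m:=\mu_g^c([a,b))>0$ and fix $\varepsilon>0$. First remove the large atoms. Since $\sum_{\tau\in D_g}\Delta^+g(\tau)\le g(b)-g(a)<\infty$, only finitely many $\tau$ have $\Delta^+g(\tau)\ge\varepsilon$. Call them $\sigma_1<\dots<\sigma_k$. These points cut $[a,b)$ into at most $k+1$ half-open pieces $[p,q)$ that contain no $\sigma_j$. More precisely, we use the intervals $[a,\sigma_1)$, $(\sigma_j,\sigma_{j+1})$ and $(\sigma_k,b)$. Since $\mu_g^c$ has no atoms, the $\sigma_j$ carry no $\mu_g^c$-mass, so one piece, say with endpoints $p<q$, has $\mu_g^c\ge m/(k+1)>0$.

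The key technical step is to replace this open piece by a half-open interval $[s,t)$ that contains no $\sigma_j$. Take $s$ slightly larger than $p$ when $p=\sigma_j$, and $t=q$. Continuity of $\mu_g^c$ from below lets us retain positive $\mu_g^c$-mass, say $\mu_g^c([s,t))\ge m/(2(k+1))$. On $[s,t)$ every jump is smaller than $\varepsilon$, so $J_g(s,t)\le\varepsilon$. This alone is not enough, because $g(t)-g(s)$ is only bounded below by a number that depends on $\varepsilon$ through $k$. So I would refine the argument in a way that does not rely on a count of large atoms.

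The refined plan is a subdivision argument. Inside $[s,t)$, the finite nonatomic measure $\mu_g^c$ can be split into many small intervals: choose points $s=x_0<x_1<\dots<x_N=t$ with $\mu_g^c([x_{i-1},x_i))=\mu_g^c([s,t))/N$. This is possible because $x\mapsto\mu_g^c([s,x))$ is continuous and nondecreasing. By pigeonhole, some subinterval $[x_{i-1},x_i)$ carries atomic mass $\mu_g^a([x_{i-1},x_i))\le\mu_g^a([s,t))/N$. On that subinterval the ratio satisfies
\[
\frac{J_g}{g(x_i)-g(x_{i-1})}\le\frac{\mu_g^a([x_{i-1},x_i))}{\mu_g^a([x_{i-1},x_i))+\mu_g^c([s,t))/N}\le\frac{\mu_g^a([s,t))}{\mu_g^a([s,t))+\mu_g^c([s,t))},
\]
where the first inequality uses $J_g\le\mu_g^a$ on the interval, and the second follows from the monotonicity of $x\mapsto x/(x+c)$. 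This bound does not depend on $N$, so I still need to make the atomic share on $[s,t)$ small.

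To do this I would use Lebesgue density, which is the main obstacle. Since $\mu_g^c\perp\mu_g^a$, the differentiation theorem applies with respect to the finite measure $\mu_g$: at $\mu_g^c$-almost every point $x$, we have $\mu_g^a(J)/\mu_g(J)\to0$ as $J$ shrinks to $x$. Choosing such an $x$ gives an interval $[s,t)$ with $\mu_g^c([s,t))>0$ whose atomic share is at most $\varepsilon$. The interval can be taken half-open with $g(s)<g(t)$ by monotonicity of $g$. Its ratio is then at most $\varepsilon$, hence $\kappa_g\le\varepsilon$ for every $\varepsilon>0$, contradicting $\kappa_g>0$.

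If one prefers to avoid the differentiation theorem, an alternative is elementary. Choose a compact set $K$ with $\mu_g^a(K)=0$ and $\mu_g^c(K)>0$, which exists by the mutual singularity of $\mu_g^a$ and $\mu_g^c$ and inner regularity. Cover $K$ by finitely many intervals whose total $\mu_g^a$-mass is below $\varepsilon\,\mu_g^c(K)$, using outer regularity of $\mu_g^a$. By pigeonhole, one of these intervals has ratio $\mu_g^a/\mu_g^c\le\varepsilon$. That interval again yields $\kappa_g\le\varepsilon$.
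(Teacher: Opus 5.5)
Your argument is correct, but it runs in the opposite direction from the paper's.

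\textbf{The paper's route.} The paper proves the statement directly. Fix $0<\kappa'<\kappa_g$. In every interval $J$ of positive $\mu_g$-mass (approximating an open interval from inside by a half-open one), $\kappa_g>0$ supplies a single atom of mass greater than $\kappa'\mu_g(J)$. The paper deletes it and recurses on the at most two remaining pieces. The residual mass after $n$ generations is then at most $(1-\kappa')^n\mu_g([s,t))$, so the increment is exhausted by the selected atoms. This needs only continuity of measures and gives an explicit geometric rate.

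\textbf{Your route.} You argue by contradiction through localization. If $\mu_g^c\neq0$, you use either differentiation of $\mu_g^a$ against the mutually singular $\mu_g^c$ (centered intervals suffice), or your regularity argument with a compact $K\subset[a,b)\setminus D_g$ and an open $U\supset K$ of small $\mu_g^a$-mass. Either yields intervals on which the entire atomic mass, not just the largest jump, is an arbitrarily small fraction of the increment. Then $J_g(s,t)\le\mu_g^a([s,t))$ gives $\kappa_g=0$.

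\textbf{What each buys.} Your route gives a formally stronger result. Pure atomicity already follows from $\inf_{g(s)<g(t)}\mu_g^a([s,t))/(g(t)-g(s))>0$. That condition is much weaker than single-jump dominance; for instance, it holds for the polynomially decaying clock of Example~\ref{ex:polynomial}, where $\kappa_g=0$. The price is reliance on differentiation or regularity theory instead of the paper's bare-hands peeling.

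\textbf{Points to tighten.}
The regularity variant is the more self-contained of your two. Its pigeonhole step is sound once the covering intervals are taken to be the components of $U$, so that $\sum_i\mu_g^a(J_i)<\varepsilon\mu_g^c(K)\le\varepsilon\sum_i\mu_g^c(J_i)$ produces an interval with $\mu_g^a(J_i)<\varepsilon\mu_g^c(J_i)$.

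The passage to a half-open $[s,t)$ is not ``by monotonicity'' but by inner approximation: $\mu_g([s,q))\uparrow\mu_g((p,q))$ as $s\downarrow p$. This is the same step the paper uses, and it costs only a constant factor in the ratio. Enlarging to $[p,q)$ instead could pick up a large atom at $p$.

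Your first two paragraphs (removing the large atoms, and the equal-mass subdivision) are, as you note yourself, dead ends and can be dropped.
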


\begin{proof}
Choose a constant $\kappa'$ satisfying $0<\kappa'<\kappa_g$. We use only intervals of the forms $[p,q)$ and $(p,q)$ with $a\le p<q\le b$. Let $J$ be such an interval and suppose that $\mu_g(J)>0$.

If $J=[p,q)$, take $J'=J$. If $J=(p,q)$, choose $p_n\downarrow p$ with $p_n\in(p,q)$ and use continuity from below to obtain
\[
\mu_g([p_n,q))\uparrow\mu_g((p,q)).
\]
Thus in either case there is a half-open interval $J'=[r,q)\subset J$ such that
\[
\mu_g(J')>
\frac{\kappa'}{\kappa_g}\,\mu_g(J).
\]
By the definition of $\kappa_g$,
\[
\sup_{\tau\in J'}\mu_g(\{\tau\})
=
\sup_{\tau\in J'}\Delta^+g(\tau)
\ge
\kappa_g\mu_g(J')
>
\kappa'\mu_g(J).
\]
Since the supremum is strictly larger than $\kappa'\mu_g(J)$, there exists an atom $\tau_J\in J'\subset J$ such that
\[
\mu_g(\{\tau_J\})>
\kappa'\mu_g(J).
\]

Fix $s,t\in I$ with $s<t$ and $\mu_g([s,t))>0$, and set $J_0=[s,t)$. Select $\tau_{J_0}$ as above and remove it. The remainder is the disjoint union of at most two intervals of the forms $[p,q)$ or $(p,q)$, and its total measure is less than
\[
(1-\kappa')\mu_g(J_0).
\]
Apply the same procedure to every positive-measure interval in this remainder. After $n$ generations there are at most $2^n$ residual intervals, and the sum of their measures is at most
\[
(1-\kappa')^n\mu_g(J_0).
\]
Let $R_n$ be the union of the residual intervals after the $n$th generation. Then $R_{n+1}\subset R_n$ and
\[
\mu_g(R_n)
\le
(1-\kappa')^n\mu_g(J_0)
\longrightarrow0.
\]
The selected atoms over all generations form a countable set $A\subset D_g\cap[s,t)$, and
\[
J_0\setminus A=\bigcap_{n\ge0}R_n.
\]
Since $\mu_g(J_0)<\infty$, continuity from above gives
\[
\mu_g(J_0\setminus A)
=
\lim_{n\to\infty}\mu_g(R_n)=0.
\]
Therefore
\[
\mu_g([s,t))
=
\sum_{\tau\in A}\mu_g(\{\tau\})
=
\sum_{\tau\in D_g\cap[s,t)}\Delta^+g(\tau).
\]
Indeed, any unselected point of $D_g\cap[s,t)$ would be an atom of positive $\mu_g$-mass contained in the null set $J_0\setminus A$. Since $s<t$ were arbitrary, the nonatomic part of $\mu_g$ vanishes, and hence $\mu_g^c=0$.
\end{proof}

\begin{corollary}\label{cor:continuous-mass-obstruction}
Let $0<\alpha<1$. If $\mu_g^c\ne0$, then $\kappa_g=0$. Consequently, there exist a metric space $(X,d)$ and a compact-valued $g$-H\"older map $F:I\to\K(X)$ of exponent $\alpha$ that admits no $g$-H\"older selection.
\end{corollary}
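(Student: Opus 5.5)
The corollary is the contrapositive of Proposition~\ref{prop:pure-atomic}, combined with Theorem~\ref{thm:clock-characterization}. The standing assumptions of this section are in force: $g$ is left-continuous and nondecreasing on $I$.

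The first step is to check that $g$ is nonconstant, which both $\kappa_g$ and the earlier results require. If $g$ were constant, then $\mu_g$ would vanish, since $\mu_g([s,t))=g(t)-g(s)=0$ for all $s<t$ and $I$ is a countable union of such intervals together with the point $b$, which carries no mass under the normalization. Hence $\mu_g^c=0$, contradicting the hypothesis. So $g$ is nonconstant, and $\kappa_g$ is defined.

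The second step is to suppose, for contradiction, that $\kappa_g>0$. Proposition~\ref{prop:pure-atomic} then gives $\mu_g^c=0$, again contradicting the hypothesis. Therefore $\kappa_g=0$. The only point needing care here is that the decomposition $\mu_g=\mu_g^a+\mu_g^c$ is unique: the atomic part is determined by the atoms $\Delta^+g(\tau)$. So the conclusion $\mu_g^c=0$ of the proposition is the same statement as the negation of our hypothesis.

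The third step, for the consequence, is to apply Theorem~\ref{thm:clock-characterization} with $0<\alpha<1$. Since (ii)$\Rightarrow$(i) there and (i) fails, (ii) fails. Concretely, the proof of that theorem runs as follows. By Corollary~\ref{cor:kappa-ud}, $\ImG$ is not uniformly disconnected, and it is bounded because $g$ is monotone on a compact interval. Theorem~\ref{thm:single-obstruction} then yields $\widehat F:\ImG\to\K(c_0(\mathbb R^2))$ with no $\alpha$-H\"older selection. Setting $F=\widehat F\circ g$ gives a compact-valued $g$-H\"older map of exponent $\alpha$, with $X=c_0(\mathbb R^2)$. Any $g$-H\"older selection of $F$ would factor through $g$ and contradict the choice of $\widehat F$.

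I do not expect a real obstacle, since every step is an invocation of an earlier result. The only mildly delicate points are confirming nonconstancy, so that $\kappa_g$ is defined, and confirming that the hypotheses of the invoked results are met: boundedness of $\ImG$ and the restriction $\alpha<1$.
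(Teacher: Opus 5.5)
Your proposal is correct and follows the paper's proof: the first assertion is the contrapositive of Proposition~\ref{prop:pure-atomic}, and the second is the failure of (ii) in Theorem~\ref{thm:clock-characterization}. Your explicit check that $\mu_g^c\ne0$ forces $g$ to be nonconstant, so that $\kappa_g$ is defined and both results apply, is a detail the paper leaves implicit.
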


\begin{proof}
The first assertion is the contrapositive of Proposition~\ref{prop:pure-atomic}; the second follows from Theorem~\ref{thm:clock-characterization}.
\end{proof}

\begin{remark}\label{rem:pure-jump-not-sufficient}
The converse of Proposition~\ref{prop:pure-atomic} is false. Example~\ref{ex:polynomial} is purely atomic but has $\kappa_g=0$. Thus the decisive property is quantitative atomic separation across scales, not atomicity alone.
\end{remark}

\section{Discussion and conclusions}\label{sec:discussion}

The question guiding this study was which Stieltjes clocks allow H\"older regularity to pass from a compact-valued multifunction to one of its selections when $0<\alpha<1$. The results give a precise answer: this happens exactly when the clock image $\ImG$ is uniformly disconnected. Thus, the existence of a H\"older selection is determined not only by the regularity of the multifunction, but also by the way the clock values are distributed.

For a left-continuous and nondecreasing Stieltjes clock, the same condition can be read directly from its jumps. The requirement $\kappa_g>0$ means that every positive increase of the clock contains a jump representing a fixed positive fraction of that increase. Under this condition, a $g$-H\"older selection can be chosen through any prescribed point of the graph, with explicit control of its H\"older regularity.

The examples show why the presence of jumps alone is not sufficient. Finite-step clocks satisfy the required condition, and clocks with infinitely many jumps may also satisfy it. In particular, geometrically decaying jumps can preserve the required dominance across scales. By contrast, for some clocks with polynomially decaying jumps, no single jump remains large enough relative to the accumulated increase. These examples show that the distribution of the jumps is more important than their mere existence.

The same distinction appears at the level of the associated Stieltjes measure. The condition $\kappa_g>0$ forces the measure to be purely atomic, but a purely atomic measure need not satisfy $\kappa_g>0$. Hence, atomicity gives only part of the picture; the relative sizes and distribution of the atoms are also essential.

The converse result completes the characterization for $0<\alpha<1$. When the clock image does not have the required separation property, one can construct a compact-valued $g$-H\"older multifunction for which no $g$-H\"older selection exists. In this sense, the condition on the clock is both sufficient and necessary for the universal selection property considered here.

Overall, the results show that the selection problem for $0<\alpha<1$ can be decided by a concrete property of the Stieltjes clock. Uniform disconnectedness of the clock image and uniform dominance of its jumps provide two equivalent ways of identifying the clocks for which H\"older regularity can be preserved by a selection.

\section*{Statements and Declarations}

\noindent\textbf{Funding.} No funding was received for conducting this study.

\medskip
\noindent\textbf{Competing interests.} The authors have no relevant financial or non-financial interests to disclose.

\medskip
\noindent\textbf{Data availability.} No datasets were generated or analysed during the current study.

\end{document}